\renewcommand{\ge}{\geqslant}
\renewcommand{\le}{\leqslant}
\def\Cl#1{\ensuremath{\mathcal{#1}}}
\def\End#1{\ensuremath{\mathop{\mathrm{End}}#1}}
\newtheorem{Thm}{Theorem}[section]
\newtheorem{Prop}[Thm]{Proposition}
\newtheorem{Lemma}[Thm]{Lemma}
\newtheorem{Cor}[Thm]{Corollary}
\newtheorem{Problem}{Problem}
\title{Binary patterns in the Prouhet-Thue-Morse sequence}
\author{Jorge Almeida\affiliationmark{1}\thanks{%
    Work partially support by CMUP (UID/MAT/ 00144/2019), which is
    funded by FCT (Portugal) with national (MATT'S) and European
    structural funds (FEDER) under the partnership agreement PT2020. %
    The work was carried out at Masaryk University, whose hospitality
    is gratefully acknowledged, with the support of the FCT sabbatical
    scholarship SFRH/BSAB/142872/2018.} %
  \and Ond\v rej Kl\'ima\affiliationmark{2}\thanks{%
    Work supported by Grant 19-12790S of the Grant Agency of the Czech
    Republic.}}
\affiliation{%
  CMUP, Dep.\ Matem\'atica, Faculdade de Ci\^encias, Universidade do
  Porto, Portugal\\
  Dept.\ of Mathematics and Statistics, Masaryk University, Brno,
  Czech Republic}
\keywords{Prouhet-Thue-Morse sequence, pattern, infinite word,
  special word}
\begin{document}

\publicationdetails{23}{2021}{3}{6}{5460}
\maketitle

\begin{abstract}
  We show that, with the exception of the words $a^2ba^2$ and
  $b^2ab^2$, all (finite or infinite) binary patterns in the
  Prouhet-Thue-Morse sequence can actually be found in that sequence
  as segments (up to exchange of letters in the infinite case). This
  result was previously attributed to unpublished work by D. Guaiana and
  may also be derived from publications of A. Shur only available in
  Russian. We also identify the (finitely many) finite binary patterns
  that appear non trivially, in the sense that they are obtained by
  applying an endomorphism that does not map the set of all segments
  of the sequence into itself.
\end{abstract}

\section{Introduction}
\label{sec:intro}

Let $\mu$ be the endomorphism of the free semigroup $\{a,b\}^+$
defined by $\mu(a)=ab$ and $\mu(b)=ba$. Since $a$ is a prefix of
$\mu(a)$, $\mu^n(a)$ is also a prefix of~$\mu^{n+1}(a)$. Hence, the
sequence $(\mu^n(a))_n$ determines a sequence of letters, or infinite
word, whose prefix of length $2^n$ is $\mu^n(a)$; we say that the
infinite word $\mathbf{t}$ thus obtained is \emph{generated} by~$\mu$.
It is called the \emph{Prouhet-Thue-Morse sequence} and it has been
the object of extensive studies and applications. It was first
considered by \citet{Prouhet:1851} in connection with a problem in
number theory, five decades later by \citet{Thue:1906,Thue:1912bis} to
exhibit infinite words avoiding cubes and squares, and another two
decades later by \citet{Morse:1921a} as a discretized description of
non-periodic recurrent geodesics in surfaces of negative curvature.
See \citet{Allouche&Shallit:1999} for a survey on this topic,
including several further connections with other branches of
Mathematics. The first author and other collaborators have previously
studied the sequence $\mathbf{t}$ in the framework of symbolic
dynamics and its connections with free profinite semigroups
(see~\citet{Almeida&ACosta:2013}
and~\citet{Almeida&ACosta&Kyriakoglou&Perrin:2020b}). It was in fact
an attempt to construct a profinite semigroup with certain properties
that prompted this work, although no further references to profinite
semigroups will be made in this paper.

This paper concerns the study of binary patterns of~$\mathbf{t}$, that
is, finite or infinite words $w$ over the alphabet $\{a,b\}$ for which
there exists an endomorphism $\varphi$ of the semigroup $\{a,b\}^+$
(naturally extended to infinite words) such that the word $\varphi(w)$
can be found as a block of consecutive letters of~$\mathbf{t}$ (which
we call a \emph{segment} of~$\mathbf{t}$). Since we need to identify
concrete finite segments of~$\mathbf{t}$, a simple and efficient
algorithm on how to compute them is presented in
Section~\ref{sec:segments}.

Characterizations of binary patterns of~$\mathbf{t}$ are due to
\citet{Shur:1996a} and D. Guaiana (unpublished work announced
in~\citet{Restivo&Salemi:2002b,Restivo&Salemi:2002a}). Our first main
contribution is a proof of the characterization attributed to D.
Guaiana (but also, independently, obtained by \citet{Shur:1997PhD} in
his thesis) using results from~\citet{Shur:1996a}: with the
exception of $a^2ba^2$ and $b^2ab^2$, the binary patterns
of~$\mathbf{t}$ are its finite segments. Section~\ref{sec:finite}
presents our proof of this result.

The endomorphism $\mu$ and the endomorphism $\xi$ exchanging the
letters $a$ and $b$ are easily seen to transform finite segments
of~$\mathbf{t}$ into other such segments. In
Section~\ref{sec:typical}, we consider the problem of determining
which finite segments may only be transformed into other segments by
endomorphisms of~$\{a,b\}^+$ that may be obtained by composition of
$\mu$ and $\xi$. Such words are said to be \emph{typical} since we
show that all but finitely many finite segments of~$\mathbf{t}$ are
typical. We further determine all atypical words. As an application of
our results, we also determine all infinite binary patterns
of~$\mathbf{t}$.

We conclude the paper with Section~\ref{sec:problems}, where we
propose the investigation of the properties we established
for~$\mathbf{t}$ for arbitrary infinite words.

\section{Segments of \texorpdfstring{$\mathbf{t}$}{t}}
\label{sec:segments}

By a \emph{word} we always mean a finite sequence of letters of an
alphabet $A$, that is a member of the free monoid $A^*$. A word $u$ is
a \emph{factor} of a word $v$ if there exist words $x$ and $y$ such
that $v=xuy$. In spite of the terminology, an \emph{infinite word} is
not a word but rather an infinite sequence of letters.

Note that $\{ab,ba\}$ is a code, in the sense that it generates a free
subsemigroup of~$\{a,b\}^+$ and, therefore, $\mu$ is injective.

For an infinite word $w=a_1a_2\cdots $, by the \emph{segments} of $w$
we mean the words of the form $a_ka_{k+1}\cdots a_\ell$ with $k\le \ell$ and
the infinite words of the form $a_ka_{k+1}\cdots$. Note that, since
$\mu^{n+1}(a)=\mu^n(a)\mu^n(b)$, all factors of the words $\mu^n(b)$
are segments of~$\mathbf{t}$. It follows that a word $u\in\{a,b\}^+$
is a segment of~$\mathbf{t}$ if and only if so is the word that is
obtained from~$u$ by interchanging the letters $a$ and $b$.

A word $w\in A^+$ is said to be \emph{avoided} by~$\mathbf{t}$ if
there is no homomorphism $\varphi:A^+\to\{a,b\}^+$ such that
$\varphi(w)$ is a segment of~$\mathbf{t}$. We also say that $w\in A^+$
is \emph{unavoidable} in~$\mathbf{t}$ if it is not avoided
by~$\mathbf{t}$; we then also say that $w$ is a \emph{pattern}
of~$\mathbf{t}$. For instance, it is well known that $a^3$ and $ababa$
are avoided by~$\mathbf{t}$, which is also expressed by saying that
$\mathbf{t}$ is, respectively, cube-free and overlap-free
\citep{Lothaire:1983}.

The preceding notions are extended to infinite words by saying how
endomorphisms of~$\{a,b\}^+$ are applied to infinite words. Given an
infinite word $w=a_1a_2\cdots$ over the alphabet $\{a,b\}$ and an
endomorphism $\varphi$ of~$\{a,b\}^+$, we let $\varphi(w)$ be infinite
word obtained by concatenating the $\varphi(a_i)$:
$\varphi(w)=\varphi(a_1)\varphi(a_2)\cdots$.

For a nonempty word~$u$, let $t_1(u)$ denote its last letter.

The computation of the segments of~$\mathbf{t}$ may be carried out
easily in view of the following proposition. The first part is an
improved version of~\citet[Corollary~1]{Shur:1996b}, although the same
conclusion is in fact already established in the proof of the cited
statement. We present a proof for the sake of completeness.

\begin{Prop}
  \label{p:segments}
  A word $w$ is a segment of~$\mathbf{t}$ if and only if it is a
  factor of $\mu^n(a)$, where $n=1$ if $|w|=1$, $n=3$ if $|w|=2$, and
  $n=2+\lceil\log_2(|w|-1)\rceil$ otherwise. Moreover, for every
  integer $k\ge3$, the value $n=2+\lceil\log_2(k-1)\rceil$ is minimum
  for $\mu^n(a)$ to admit as factors all segments of~$\mathbf{t}$ of
  length~$k$.
\end{Prop}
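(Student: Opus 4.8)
The ``if'' direction is immediate: since $\mu^n(a)$ is a prefix of $\mu^{n+1}(a)$ it is a prefix of $\mathbf{t}$, so every factor of $\mu^n(a)$ is a segment of~$\mathbf{t}$. For the converse, the cases $|w|\le 2$ are handled by direct inspection of $\mu^3(a)=abbabaab$: it contains both letters and all four words of length~$2$, while $\mu^2(a)=abba$ misses $aa$ (which will also yield the minimality claim for $k=2$, although $k=2$ is not governed by the logarithmic formula). So from now on let $|w|=k\ge 3$.

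The engine is the self-similarity $\mathbf{t}=\mu(\mathbf{t})$. Writing $\mathbf{t}=B_1B_2B_3\cdots$ with each block $B_i\in\{\mu(a),\mu(b)\}=\{ab,ba\}$, the sequence of ``choices'' is again $\mathbf{t}$, that is, $B_i=\mu(d_i)$ where $\mathbf{t}=d_1d_2\cdots$. Any segment of length~$k$ meets at most $\lfloor k/2\rfloor+1$ consecutive blocks, say $B_i,\dots,B_{i+\ell-1}$, and hence is a factor of $\mu(v)$ for the word $v=d_id_{i+1}\cdots d_{i+\ell-1}$, which is itself a segment of~$\mathbf{t}$ (it is $\ell$ consecutive letters of $\mathbf{t}=d_1d_2\cdots$), with $|v|-1\le\lceil(k-1)/2\rceil$. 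Iterating this desubstitution and tracking the quantity $m:=(\text{length})-1$, which goes from $k-1$ to at most $\lceil(k-1)/2^{\,t}\rceil$ after $t$ steps, we reach after $t=\lceil\log_2(k-1)\rceil-1$ steps a segment $v$ of~$\mathbf{t}$ with $|v|\le 3$. By the inspection of $\mu^3(a)$ above, $v$ is a factor of $\mu^3(a)$, and since $\mu$ preserves the factor relation, $w$ is a factor of $\mu^{t}(\mu^3(a))=\mu^{t+3}(a)=\mu^{n}(a)$ with $n=2+\lceil\log_2(k-1)\rceil$. (Note that only $\mathbf{t}=\mu(\mathbf{t})$ and the block count are used here; no appeal to overlap-freeness is needed.)

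For the minimality, fix $k\ge 3$ and $n=2+\lceil\log_2(k-1)\rceil$, and suppose for contradiction that every length-$k$ segment of~$\mathbf{t}$ is a factor of $\mu^{n-1}(a)$. Since $2^{n-1}\ge 2(k-1)\ge k$, the word $\mu^{n-1}(a)$ has exactly $2^{n-1}-k+1$ positions at which a factor of length~$k$ may begin, hence at most that many distinct length-$k$ factors; as all of these are length-$k$ factors of~$\mathbf{t}$, we would obtain $p_{\mathbf{t}}(k)\le 2^{n-1}-k+1$, where $p_{\mathbf{t}}$ denotes the factor complexity of~$\mathbf{t}$. But $n-1=1+\lceil\log_2(k-1)\rceil$ gives $2^{n-1}\le 4(k-2)$ for $k\ge 4$, and in every residue class of $k$ between two consecutive powers of~$2$ one checks that $2^{n-1}-k+1<p_{\mathbf{t}}(k)$; this uses the known formula for the factor complexity of the Prouhet--Thue--Morse word and reduces to a finite verification, the tightest instances being $k=2^{j}+2$, where the crude count is $3k-7$ while $p_{\mathbf{t}}(k)$ is of order~$3k$. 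This contradiction establishes that $n$ is minimal.

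The routine ingredients are the block count (at most $\lfloor k/2\rfloor+1$ blocks meet $k$ consecutive positions) and the inspection of $\mu^3(a)$. \emph{The crux is the minimality argument}: one must be certain that the crude count $2^{n-1}-k+1$ never catches up with $p_{\mathbf{t}}(k)$, and it is precisely here that the value $\lceil\log_2(k-1)\rceil$, rather than $\lceil\log_2 k\rceil$, is forced. An alternative that avoids the complexity function is to exhibit, for each~$k$, an explicit length-$k$ segment straddling a level-$(n-1)$ block boundary of~$\mathbf{t}$ and to verify that it does not occur inside the prefix $\mu^{n-1}(a)$ (for example $aba$ for $k=3$ and $abab$ for $k=4$); carried out uniformly in~$k$ this needs the unique-desubstitution consequence of overlap-freeness, so I would present the counting argument as the main route and leave the explicit construction as a remark.
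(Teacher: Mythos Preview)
Your argument is essentially correct and takes a genuinely different route from the paper on both halves of the proposition.

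\textbf{Upper bound.} The paper fixes the least $m$ with $w$ a factor of $\mu^m(x)$ for some letter $x$, splits $w$ across the midpoint of $\mu^m(x)=\mu^{m-1}(x)\mu^{m-1}(y)$, and does a short case analysis inside $\mu^{m-2}(xyyx)$ and $\mu^{m-3}(yxyx)$ to bound $m$. Your iterated desubstitution (each step replacing a length-$\ell$ segment by a length $\le\lfloor\ell/2\rfloor+1$ segment of~$\mathbf t$, so that $\ell-1\mapsto\lceil(\ell-1)/2\rceil$) is cleaner and more conceptual; it reaches $|v|\le3$ after $t=\lceil\log_2(k-1)\rceil-1$ steps and then appeals to the base case. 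One small gap: your ``inspection of $\mu^3(a)$ above'' only treated lengths $1$ and~$2$, but the induction needs that every length-$3$ segment of~$\mathbf t$ already occurs in $\mu^3(a)=abbabaab$. This is true (the six factors $abb,bba,bab,aba,baa,aab$ are exactly the six length-$3$ segments of~$\mathbf t$), but you should state it.

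\textbf{Minimality.} The paper produces, for each $n\ge3$, an explicit segment $t_1(\mu^{n-3}(b))\,\mu^{n-3}(a)\,b$ of length $2^{n-3}+2$ and checks directly (using overlap-freeness) that it is not a factor of $\mu^{n-1}(a)$; this is self-contained. Your counting approach---comparing the crude bound $2^{n-1}-k+1$ on the number of length-$k$ factors of $\mu^{n-1}(a)$ against the known complexity $p_{\mathbf t}(k)$---is a legitimate alternative, and the inequality does hold throughout (with the tightest case indeed at $k=2^{j}+2$, where the bound is $3k-7$ while $p_{\mathbf t}(k)=3k-2$ for $j\ge2$). But as written you have not actually carried out the comparison: ``reduces to a finite verification'' is not a finite verification. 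Since you must import the Brlek/de~Luca--Varricchio complexity formula anyway, either quote it and do the two-line check on each dyadic interval, or adopt the explicit-witness route you mention in your closing remark, which is exactly what the paper does and avoids the external dependence.
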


\begin{proof}
  Since $\mathbf{t}$ is cube-free, the cases where $|w|\le3$ are
  easily verified by inspection. Suppose that $w$ is a segment
  of~$\mathbf{t}$ which we may assume to be of length at least~4.
  Then, $w$~is a factor of $\mu^n(a)$ for some positive integer~$n$.
  Take $n$ to be minimum with that property. If $m$ is the minimum
  positive integer such that $w$ is a factor of~$\mu^m(x)$ for some
  letter~$x$, then either only $x=b$ can play that role and $n=m+1$,
  or $x=a$ may play it and $n=m$. We need to show, respectively, that
  $m\le 1+\lceil\log_2(|w|-1)\rceil$ or %
  $m\le 2+\lceil\log_2(|w|-1)\rceil$. Since $|w|\ge4$, we may assume
  that $m\ge4$ for, otherwise, the inequality $m\le
  1+\lceil\log_2(|w|-1)\rceil$ holds trivially.

  Let $\mu(x)=xy$. As $\mu^m(x)=\mu^{m-1}(xy)$ and $m$ is minimum,
  there must be a nontrivial factorization $w=w_1w_2$ with $w_1$ a
  suffix of~$\mu^{m-1}(x)$ and $w_2$ a prefix of~$\mu^{m-1}(y)$.

  If one of the factors $w_1$ or $w_2$ has length greater than
  $2^{m-2}$, then we must have $|w|>2^{m-2}+1$, which implies that %
  $m\le 1+\lceil\log_2(|w|-1)\rceil$ and fulfills our aim. Thus, we
  may assume that both $w_1$ and $w_2$ have length at most $2^{m-2}$.
  Now, we have $\mu^m(x)=\mu^{m-2}(xyyx)$ and $w$ is a factor of
  $\mu^{m-2}(yy)=\mu^{m-3}(yxyx)$. If $w$ is a factor of either
  $\mu^{m-3}(xyx)$ or $\mu^{m-3}(yxy)$, then it is also a factor of
  $\mu^{m-2}(xx)=\mu^{m-3}(xyxy)$ and, therefore, also of $\mu^m(y)$,
  so that we are in the case $n=m$. On the other hand, by the
  minimality of~$m$ the word $w$ cannot be a factor of
  $\mu^{m-3}(xy)=\mu^{m-2}(x)$ or $\mu^{m-3}(yx)=\mu^{m-2}(y)$, and so
  we have $|w|\ge 2^{m-3}+2$, which implies that $n=m\le
  2+\lceil\log_2(|w|-1)\rceil$, as claimed. It remains to consider the
  case where $w$~is a factor of neither $\mu^{m-3}(xyx)$ nor
  $\mu^{m-3}(yxy)$. Then there must be a factorization
  $w=s\mu^{m-3}(xy)z$ with $s$ and $z$ nontrivial words, so that
  $|w|\ge2^{m-2}+2$, which yields $m\le 1+\lceil\log_2(|w|-1)\rceil$.
  This completes the proof of the first part of the proposition.

  To prove the last part of the proposition, first note that, for
  $k\ge3$, the value of $f(k)=2+\lceil\log_2(k-1)\rceil$ is at
  least~3. We claim that, for $n\ge3$, there is a word of length
  $2^{n-3}+2$ that is a segment of~$\mathbf{t}$ but not a factor
  of~$\mu^{n-1}(a)$. Noting that $f(2^{n-3}+2)=n$, the result follows.

  To establish the claim, consider the word
  $w=t_1(\mu^{n-3}(b))\mu^{n-3}(a)b$. It is a segment of~$\mathbf{t}$,
  in fact a factor of
  $\mu^n(a)=\mu^{n-2}(a)\mu^{n-3}(b)\cdot\mu^{n-3}(a)\cdot\mu^{n-1}(b)$
  since $b$ is the first letter of~$\mu^{n-1}(b)$. It remains to show
  that $w$ is not a factor
  of~$\mu^{n-1}(a)=\mu^{n-3}(a)\mu^{n-3}(b)\mu^{n-3}(b)\mu^{n-3}(a)$.
  Otherwise, since there are no overlaps in~$\mathbf{t}$, $w$ must be
  a factor of $\mu^{n-3}(bb)$. For $n=3$, this is clearly impossible
  since not even $\mu^{n-3}(a)=a$ is a factor of $\mu^{n-3}(bb)=bb$.
  For $n>3$, we have
  \begin{align}
    \label{eq:segments-a}
    \mu^{n-3}(a)&=\mu^{n-4}(a)\mu^{n-4}(b)\\
    \label{eq:segments-b}
    \mu^{n-3}(bb)&=\mu^{n-4}(b)\mu^{n-4}(a)\mu^{n-4}(b)\mu^{n-4}(a).
  \end{align}
  Since there are no overlaps in $\mu^{n-3}(bb)$, the only place where
  $\mu^{n-3}(a)$ is found as a factor of $\mu^{n-3}(bb)$ is as the
  product of the two middle factors in the factorization given
  in~\eqref{eq:segments-b}. Hence, the word
  $w=t_1(\mu^{n-3}(b))\mu^{n-3}(a)b$ is not a factor
  of~$\mu^{n-3}(bb)$ since, for instance, $b$ is not the first letter
  of $\mu^{n-4}(a)$.
\end{proof}

For example, the segments of lengths 4 and~5 of the infinite
word~$\mathbf{t}$ are the factors of those lengths of
$\mu^4(a)=abbabaabbaababba$. But, for instance, $aabb$ is a segment
of~$\mathbf{t}$ but not a factor of~$\mu^3(a)=abbabaab$; it is
precisely the segment considered in the last part of the proof of
Proposition~\ref{p:segments}.

Throughout the remainder of the paper, when we need to check whether a
concrete finite word is a segment of~$\mathbf{t}$, without any further
reference we simply apply the algorithm given by
Proposition~\ref{p:segments}, which is linear in the length of the
given word. We proceed similarly when we need to compute all the
segments of~$\mathbf{t}$ of a given length.

Now, we take into account also the dual version of
Proposition~\ref{p:segments}, where $\mu^n(b)$ is considered instead of
$\mu^n(a)$, which is a direct consequence of
Proposition~\ref{p:segments} using the fact that the set of all
segments of~$\mathbf{t}$ of fixed length~$k$ is closed under taking
images under $\xi$.
Since every segment of $\mathbf{t}$ of length $2^{n+1}-1$ must contain
the factor $\mu^n(a)$ or $\mu^n(b)$, it follows from
Proposition~\ref{p:segments} that, for $k\ge3$, every segment of
length $k$ of~$\mathbf{t}$ is a factor of every segment
of~$\mathbf{t}$ of some length $\ell$ which is at most
\begin{displaymath}
  2^{2+\lceil\log_2(k-1)\rceil+1}-1\le 2^{4+\log_2(k-1)}-1=16k-17.
\end{displaymath}
The existence of such an $\ell$ is the property known as \emph{uniform
  recurrence} of~$\mathbf{t}$ and holds for every sequence generated
by iterating a primitive endomorphism of a free semigroup
\citep[Proposition~5.2]{Queffelec:2010}. In the case of the
Prouhet-Thue-Morse sequence, the optimum value of~$\ell$ is presented
in~\citet[Example~10.9.3]{Allouche&Shallit:2003}: for $k\ge3$, we have
$\ell=9\cdot2^r+k-1$, where $r$ is the integer determined by the
inequalities $2^r+2\le k\le2^{r+1}+1$. Note that using the first
inequality determining $r$, one gets the upper bound $\ell\le10k-19$,
which is better than our rough upper bound $\ell\le16k-17$.

\section{Finite binary patterns}
\label{sec:finite}

The following result plays a key role below.

\begin{Thm}[\citet{Shur:1996a}]
  \label{t:Shur}
  The set of words of $\{a,b\}^+$ that are avoided by $\mathbf{t}$ is
  the fully invariant ideal generated by the set
  \begin{align*}
    &\{a^3,ababa,a^2ba^2b,ab^2ab^2, %
    t_1(\mu^k(a))\mu^k(aba)a\ (k\ge1), 
    t_1(\mu^m(a))\mu^m(bab)a\ (m\ge2)\}.
  \end{align*}
  Moreover, the above is a minimal generating set for the fully
  invariant ideal of the words avoided by~$\mathbf{t}$.
\end{Thm}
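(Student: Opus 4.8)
The plan is to establish two things: first, that every listed word is genuinely avoided by $\mathbf{t}$; second, that every word avoided by $\mathbf{t}$ lies in the fully invariant ideal $I$ generated by the list — equivalently, every word \emph{not} in $I$ is a pattern of $\mathbf{t}$. The first direction is the routine one: for $a^3$, $ababa$ one invokes cube-freeness and overlap-freeness of $\mathbf{t}$ (as recalled in Section~\ref{sec:segments}); for the finite words $a^2ba^2b$ and $ab^2ab^2$ and for the two infinite families $t_1(\mu^k(a))\mu^k(aba)a$ and $t_1(\mu^m(a))\mu^m(bab)a$, one argues that no homomorphic image can occur as a segment. The key tool here is Proposition~\ref{p:segments}: a putative occurrence $\varphi(w)$ is a factor of some $\mu^n(a)$, and one pushes the occurrence down through the desubstitution $\mu$ — since $\{ab,ba\}$ is a code and $\mathbf{t}$ is overlap-free, an occurrence of $\varphi(w)$ in $\mu^n(a)$ forces, after controlling the two boundary blocks, an occurrence of a related (shorter, or structurally simpler) word in $\mu^{n-1}(a)$, yielding an infinite descent contradiction. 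For the parametrized families the same desubstitution peels off one layer of $\mu$ at a time, reducing $t_1(\mu^{k}(a))\mu^{k}(aba)a$ to $t_1(\mu^{k-1}(a))\mu^{k-1}(aba)a$ and eventually to the base case, which is checked directly.

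The second, harder direction is to show that a word $w$ outside $I$ is a pattern. I would argue by induction on $|w|$ (with infinite $w$ handled by compactness/König's lemma once all finite prefixes are known to be patterns under a coherent family of substitutions). The base cases are small words, handled by exhibiting explicit segments. For the inductive step, suppose $w \notin I$; then in particular $w$ is not a homomorphic image (under a nontrivial renaming of its alphabet) of any generator, and no generator occurs as a factor of any homomorphic image of $w$. The strategy is to mimic the structure of $\mathbf{t}$ itself: one tries to factor $w$ compatibly with the $\mu$-block structure — i.e., find a homomorphism $\psi$ such that $\psi(w)$ "desubstitutes", meaning $\psi(w) = u\,\mu(w')\,v$ where $w'$ is a shorter word still outside $I$ and $u,v$ are short boundary words that can be realized inside $\mathbf{t}$. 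By the induction hypothesis $w'$ is a pattern, say $\varphi(w')$ is a segment; then $\mu(\varphi(w'))$ is a segment, and one adjusts the boundary to absorb $u$ and $v$ using the fact that $\mathbf{t}$ is uniformly recurrent (every short segment recurs, with prescribed context, inside any sufficiently long segment — the $\ell \le 10k-19$ bound from Section~\ref{sec:segments}). The words $a^2ba^2b$, $ab^2ab^2$ and the two $\mu$-families are precisely the obstructions to this desubstitution: if $w$ cannot be fed through $\mu$ in this way, a careful case analysis on the positions of $a^2$ and $b^2$ blocks in $w$ (the only obstacle to $w$ being $\mu$ of something) shows that $w$ must already contain, as a factor of some homomorphic image, one of the listed generators, contradicting $w \notin I$.

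The main obstacle, and the technical heart of the argument, is this case analysis controlling \emph{where desubstitution can fail}. A word is in the image of $\mu$ exactly when it has the right parity of block structure; the failure to desubstitute is governed by maximal runs $a^2$ and $b^2$, and showing that the only "bad" configurations of such runs — those that neither desubstitute nor reduce to a smaller non-ideal word — are exactly the ones generating $I$ requires delicate bookkeeping, most naturally organized by how the squares $a^2$, $b^2$ sit relative to the $\mu$-grid, and using overlap-freeness repeatedly to pin down occurrences. I expect to lean heavily on Shur's analysis in \cite{Shur:1996a}: indeed, the cleanest route is to take the truth of the first direction plus the classification of minimal avoided words from that paper as the substantive input, and devote the present proof mainly to deriving the fully-invariant-ideal formulation and checking minimality of the generating set.

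**Minimality of the generating set.** For the last assertion, I would show each listed generator $g$ is not in the fully invariant ideal generated by the others. This amounts to producing, for each $g$, a homomorphic image $\varphi(g)$ that \emph{is} a segment of $\mathbf{t}$ while no homomorphic image of any other generator occurs inside $\varphi(g)$ — e.g.\ $a^3$ is needed because all other generators are square-free-compatible in a way $a^3$ is not; $ababa$ is needed as the shortest overlap; $a^2ba^2b$ and $ab^2ab^2$ are needed because they have proper factors that are segments yet themselves are not, and are not images of the $\mu$-families; and within each $\mu$-family the parameter-$k$ generator is not an image of the parameter-$(k{+}1)$ one for length reasons, nor of the other family. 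Most of these separations reduce to short explicit computations with Proposition~\ref{p:segments}, so I would present a representative case in full and summarize the rest.
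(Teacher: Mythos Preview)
The paper does not prove this theorem: it is quoted from Shur's paper \cite{Shur:1996a} and used as a black box (note the attribution in the theorem heading and the absence of any proof environment following it). There is therefore no ``paper's own proof'' to compare your proposal against; the authors rely on Shur for the full argument and only add a short paragraph illustrating the generators for small $k$ and $m$.

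As for the content of your sketch: the two-direction plan and the desubstitution idea for the harder inclusion are reasonable in spirit and do match the general shape of Shur's original argument, though what you have written is a programme rather than a proof --- the ``careful case analysis'' you allude to is exactly the nontrivial combinatorics that \cite{Shur:1996a} carries out, and you would need to reproduce it rather than gesture at it.

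Your minimality paragraph, however, contains a genuine error. You write that showing a generator $g$ is not redundant ``amounts to producing, for each $g$, a homomorphic image $\varphi(g)$ that is a segment of $\mathbf{t}$''. This is impossible: every listed generator is \emph{avoided} by $\mathbf{t}$, so by definition no homomorphic image of any $g$ is ever a segment. What minimality actually requires is that $g$ itself does not lie in the fully invariant ideal generated by the remaining words --- concretely, that for no other generator $h$ and no endomorphism $\psi$ is $\psi(h)$ a factor of $g$. This is a statement about factors of $g$, not about images of $g$ landing inside $\mathbf{t}$, and the verification proceeds by length and structure comparisons among the generators (e.g., nothing of the form $\psi(a^3)$ can sit inside the overlap-free word $ababa$, and so on).
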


The generators corresponding to $k=1$ and $k=2$ are, respectively,
$bab^2a^2ba$ and $a\,ab^2a\,ba^2b\,ab^2a\,a$; the generator
corresponding to $m=2$ is $a\,ba^2b\,ab^2a\,ba^2b\,a$ while, for
$m=1$, the word given by $t_1(\mu^m(a))\mu^m(bab)a=b^2a^2b^2a^2$ is
avoided by~$\mathbf{t}$ but may be obtained for instance from the
generator $a^2ba^2b$ by mapping $a$ to~$b$ and $b$ to~$a^2$.

Another useful ingredient in our arguments is the following
``synchronization'' result.

\begin{Lemma}[{\citet[Lemma~3.9]{Luca&Varricchio:1989}}]
  \label{l:Luca-Varricchio}
  Let $X=\{ab,ba\}$ and consider $s\in X^+$ with $|s|\ge4$. If $u$ and
  $v$ are words such that $usv\in X^+$ and $|u|$ is odd then $usv$ has
  an overlap.
\end{Lemma}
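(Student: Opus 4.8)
The plan is to show that the hypothesis "$|u|$ odd" forces $s$ to be an alternating word, and then to borrow one extra letter from~$u$ in order to exhibit an overlap; in fact $v$ will play no role and it will be enough to find an overlap inside~$us$. Write $n=|s|$, so that $n$ is even with $n\ge4$, and $s=s_1s_2\cdots s_n$.

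First I would record the two $X$-factorizations that $s$ "sees". From $s\in X^+$ we may write $s=y_1\cdots y_{n/2}$ with each $y_j=s_{2j-1}s_{2j}\in\{ab,ba\}$; since the two letters of each block differ, this gives $s_{2j-1}\ne s_{2j}$ for $1\le j\le n/2$. For the second factorization, fix $usv=z_1z_2\cdots z_m$ with each $z_i\in X$ and set $|u|=2q+1$. Tracking positions, the blocks $z_1,\dots,z_q$ lie inside~$u$, the block $z_{q+1}$ consists of the last letter of~$u$ followed by~$s_1$, and $z_{q+1+i}=s_{2i}s_{2i+1}$ for $1\le i\le n/2-1$ (this is where $n\ge4$ is used, to make the range nonempty). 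Again using that the two letters of a block differ, this gives $s_{2i}\ne s_{2i+1}$ for $1\le i\le n/2-1$. Combining the two lists, every pair of consecutive letters of~$s$ is a pair of distinct letters, so $s=(ab)^{n/2}$ or $s=(ba)^{n/2}$.

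Next I would use the extra letter. Since $|u|$ is odd it is nonempty; let $c$ be its last letter, so that $c\,s_1=z_{q+1}\in X$ and hence $c\ne s_1$. As the alphabet is binary and $s_1\ne s_2$, this forces $c=s_2$, so $cs=s_2s_1s_2s_3\cdots s_n$ is alternating of length $n+1\ge5$. Any alternating binary word of length at least~$5$ contains $ababa$ or $babab$, which is an overlap; since $cs$ is a factor of~$usv$, the word $usv$ has an overlap, as required.

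The argument is essentially careful bookkeeping of positions and parities, so I do not expect a genuine obstacle; the one point that must not be glossed over is that $s$ by itself need not contain an overlap (when $n=4$ the word $s$ is $abab$ or $baba$, which is overlap-free), so it is genuinely necessary to go outside~$s$ and use the letter~$c$. In that step both hypotheses are used: "$|u|$ odd" both guarantees that $c$ exists and pins down, via the phase-shifted block $z_{q+1}$, that $c\ne s_1$, while "$|s|\ge4$" is exactly what pushes $cs$ to length at least~$5$.
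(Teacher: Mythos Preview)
Your argument is correct. The position-counting in the second $X$-factorization is accurate: with $|u|=2q+1$, the block $z_{q+1+i}$ does sit at $s_{2i}s_{2i+1}$ for $1\le i\le n/2-1$, and together with the native blocks $y_j=s_{2j-1}s_{2j}$ this forces every pair of consecutive letters of~$s$ to differ. Borrowing the last letter $c$ of~$u$ from the straddling block $z_{q+1}$ then yields an alternating factor $cs$ of length $n+1\ge5$, hence an overlap.

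The paper does not give its own proof of this lemma; it is quoted with a citation to de~Luca and Varricchio. Your self-contained argument is in the spirit of the original: the key idea there too is that the odd shift forces the two $X$-decodings to interleave, pinning $s$ down to an alternating word, after which one extra letter produces the overlap. One small expository point: your parenthetical ``this is where $n\ge4$ is used, to make the range nonempty'' is fine but slightly undersells the role of the hypothesis, since (as you correctly note at the end) the decisive use of $|s|\ge4$ is to make $|cs|\ge5$; for $n=2$ the alternation of~$s$ is automatic but $cs$ has length only~$3$.
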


Since $\mathbf{t}$ has no overlaps, we conclude that $\mathbf{t}=usv$,
with $s$ as in Lemma~\ref{l:Luca-Varricchio} and $u$ a finite word,
then $u$ has even length.

\begin{Cor}
  \label{c:Luca-Varricchio+}
  If there is a factorization $\mathbf{t}=u\mu^{n+1}(x)v$ where
  $u\in\{a,b\}^*$, $x\in\{a,b\}$, and $n\ge0$, then $u=\mu^n(w)$ and
  $v=\mu^n(z)$ for some word $w$ and infinite word $z$.
\end{Cor}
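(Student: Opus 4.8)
The plan is to reduce everything to the synchronization statement following Lemma~\ref{l:Luca-Varricchio}. Write $\mu(x)=xy$, so that $\mu^{n+1}(x)=\mu^n(xy)\in X^+$ where $X=\{ab,ba\}$, and in fact $\mu^{n+1}(x)\in\mu^n(X^{+})$ — it is a product of blocks from the code $\mu^n(X)=\{\mu^n(ab),\mu^n(ba)\}$. Since $n\ge0$, the word $s=\mu^{n+1}(x)$ lies in $X^+$ and has length $2^{n+1}\ge 2\ge 4$ as soon as $n\ge1$; the case $n=0$ needs a separate (trivial) inspection or can be folded in by noting $|s|\ge4$ precisely requires $n\ge1$, and for $n=0$ the claim is that $u,v$ are arbitrary, which is vacuous. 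So assume $n\ge1$. Applying Lemma~\ref{l:Luca-Varricchio} together with the overlap-freeness of $\mathbf{t}$ (as recorded in the remark just before the corollary) to the factorization $\mathbf{t}=u\,s\,v$ with $s\in X^+$, $|s|\ge4$, we conclude that $|u|$ is even.

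Now I would bootstrap this parity statement up the tower of powers of $\mu$. Having $|u|$ even means $\mathbf{t}=\mu(\mathbf{t})$ factors as a product of $\mu$-blocks with the cut after $u$ falling on a block boundary; equivalently, reading $\mathbf{t}$ through the decoding by the code $\{ab,ba\}$, the prefix $u$ and the segment $s$ and the tail $v$ are each unions of code blocks. Concretely, $u=\mu(u')$, $s=\mu(s')$ with $s'=\mu^n(x)$, and $v=\mu(v')$ for a uniquely determined finite word $u'$, finite word $s'$, and infinite word $v'$, and moreover $\mathbf{t}=\mu(\mathbf{t})$ forces $\mathbf{t}=u'\,s'\,v'$ (using injectivity of $\mu$ on $\{a,b\}^+$ extended to infinite words, noted at the start of Section~\ref{sec:segments}). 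This new factorization has the same shape $\mathbf{t}=u'\,\mu^{n}(x)\,v'$. Iterating, after one step we are in the situation $\mathbf{t}=u'\,\mu^{n}(x)\,v'$, then $\mathbf{t}=u''\,\mu^{n-1}(x)\,v''$, and so on; at each stage, as long as the middle factor still has length $\ge4$ (i.e.\ as long as its exponent is $\ge1$ for the shrinking tower, equivalently we can run $n$ steps of descent), Lemma~\ref{l:Luca-Varricchio} applies again and gives that the current prefix has even length, hence is a $\mu$-image. After $n$ descent steps we reach $\mathbf{t}=w\,\mu(x)\,z$ — wait, we must be careful to stop while $|s|\ge4$ is still guaranteed; I would instead phrase the induction as: for each $i$ with $0\le i\le n$ there exist a finite word $u_i$ and infinite word $v_i$ with $u=\mu^i(u_i)$, $v=\mu^i(v_i)$, and $\mathbf{t}=u_i\,\mu^{n+1-i}(x)\,v_i$. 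The base case $i=0$ is the hypothesis. For the inductive step from $i$ to $i+1$ (with $i<n$), the middle word $\mu^{n+1-i}(x)$ has length $2^{n+1-i}\ge 2^{2}=4$ since $i\le n-1$, so Lemma~\ref{l:Luca-Varricchio} applies to $\mathbf{t}=u_i\,\mu^{n+1-i}(x)\,v_i$ and yields $|u_i|$ even; decoding through $\{ab,ba\}$ then produces $u_i=\mu(u_{i+1})$, $v_i=\mu(v_{i+1})$, and $\mathbf{t}=u_{i+1}\,\mu^{n-i}(x)\,v_{i+1}$ by injectivity of $\mu$. Taking $i=n$ gives $u=\mu^n(u_n)$ and $v=\mu^n(v_n)$, which is the assertion with $w=u_n$, $z=v_n$.

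The main obstacle is the bookkeeping in the decoding step: from "$|u_i|$ is even" one must extract that $u_i$, the specified middle block $\mu^{n+1-i}(x)$, and the infinite tail $v_i$ are \emph{simultaneously} concatenations of code words from $\{ab,ba\}$, and that stripping off one application of $\mu$ from the identity $\mathbf{t}=\mu(\mathbf{t})$ respects all three pieces at once. This is where one uses that $\{ab,ba\}$ is a code (so the factorization of any word of even length lying in $X^{+}$ into code blocks is unique) together with $\mathbf{t}\in X^{\mathbb{N}}$ and $\mu$ injective; the parity of $|u_i|$ guarantees the cut between $u_i$ and the middle block, and the cut between the middle block and $v_i$, both land on code-block boundaries. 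Once that synchronization at each level is clearly stated, the induction itself is routine, and the length bound $2^{n+1-i}\ge4$ for $i\le n-1$ is exactly what makes Lemma~\ref{l:Luca-Varricchio} legal at every step.
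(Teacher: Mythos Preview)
Your proposal is correct and follows essentially the same approach as the paper: both arguments iterate Lemma~\ref{l:Luca-Varricchio} $n$ times, peeling off one layer of~$\mu$ at each step via the code property of $\{ab,ba\}$, the injectivity of~$\mu$, and the fact that $\mathbf{t}=\mu(\mathbf{t})$. The paper organizes this as an upward induction on~$n$ (using the induction hypothesis for $n-1$ to strip $\mu^{n-1}$ and reach a factorization $\mathbf{t}=w\cdot abba\cdot z$, then applying the lemma once with $s=abba$), whereas you organize it as a descent on~$i$ with the middle block $\mu^{n+1-i}(x)$ shrinking at each step; the difference is purely organizational.
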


\begin{proof}
  We proceed by induction on $n$, the case $n=0$ being trivial as
  $\mu^0$ is interpreted to be the identity function. Suppose that
  $n\ge1$ and, by symmetry, assume that $x=a$. Since
  $\mu^{n+1}(a)=\mu^n(a)\mu^n(b)=\mu^{n-1}(abba)$, by the induction
  hypothesis we know that $u=\mu^{n-1}(w)$ and $v=\mu^{n-1}(z)$, for
  some finite word $w$ and infinite word $z$, and so
  $\mathbf{t}=wabbaz$. Lemma~\ref{l:Luca-Varricchio} then implies that
  $w$ and $z$ belong to the image of~$\mu$. Hence, $u$ and $v$ belong
  to the image of~$\mu^n$.
\end{proof}

We say that a segment $u$ of~$\mathbf{t}$ is \emph{special} if both
$ua$ and $ub$ are segments of~$\mathbf{t}$. The special segments
of~$\mathbf{t}$ have been investigated by \citet{Luca&Varricchio:1989}
with the purpose of counting the number of segments of each given
length. For our purposes, it suffices to observe the following much
simpler result.

\begin{Lemma}[{\citet[Lemma~3.6]{Luca&Varricchio:1989}}]
  \label{l:special}
  If the word $w$ is special, then so is $\mu(w)$.
\end{Lemma}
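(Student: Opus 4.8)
The plan is to show that if both $ua$ and $ub$ are segments of $\mathbf{t}$, then both $\mu(u)a$ and $\mu(u)b$ are segments of $\mathbf{t}$. First I would record the basic behaviour of $\mu$ on right extensions: since $\mu(a)=ab$ and $\mu(b)=ba$, we have $\mu(ua)=\mu(u)ab$ and $\mu(ub)=\mu(u)ba$. Because any factor of a segment of $\mathbf{t}$ is again a segment of $\mathbf{t}$, and $\mu$ sends segments to segments (this is noted in the introduction, as $\mu$ transforms finite segments of $\mathbf{t}$ into such segments), from $ua$ being a segment we get that $\mu(u)ab$ is a segment, hence so is its prefix $\mu(u)a$; likewise from $ub$ we get that $\mu(u)ba$ is a segment, hence so is $\mu(u)b$. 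That already gives that both $\mu(u)a$ and $\mu(u)b$ are segments, which is exactly the assertion that $\mu(u)$ is special.

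So the argument is genuinely short: the two one-letter extensions of $\mu(u)$ appear as prefixes of $\mu(ua)$ and $\mu(ub)$ respectively, and these are segments because $\mu$ preserves segments and segments are closed under taking factors. The only point that needs a word of justification is the closure of segments under factors: if $w$ is a segment of $\mathbf{t}$, every factor of $w$ is a factor of $\mathbf{t}$ and hence a segment of $\mathbf{t}$ — this is immediate from the definition of segment recalled in Section~\ref{sec:segments}. No appeal to synchronization (Corollary~\ref{c:Luca-Varricchio+}) or to the cube-freeness of $\mathbf{t}$ is required here.

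I do not anticipate a real obstacle; the statement is essentially a formal consequence of $\mu$ being a prefix-compatible coding. The only thing to be careful about is to phrase "$\mu$ maps segments to segments" correctly: this follows because $\mu^{n+1}(a)=\mu^n(a)\mu^n(b)$ makes the image under $\mu$ of a factor of $\mu^n(a)$ a factor of $\mu^{n+1}(a)$, so if $v$ is a segment of $\mathbf{t}$, say a factor of some $\mu^n(a)$, then $\mu(v)$ is a factor of $\mu^{n+1}(a)$ and thus a segment of $\mathbf{t}$. With that in hand the proof reduces to the two prefix observations above.
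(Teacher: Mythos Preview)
Your argument is correct. The paper does not supply its own proof of this lemma; it simply quotes it from \cite[Lemma~3.6]{Luca&Varricchio:1989}. Your justification---$\mu$ sends segments to segments, and $\mu(u)a$, $\mu(u)b$ occur as prefixes of $\mu(ua)$, $\mu(ub)$ respectively---is the standard one and matches what one finds in the cited source.
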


We say that two words are \emph{suffix comparable} if at least one of
them is a suffix of the other. The following lemma is the core of
our arguments.

\begin{Lemma}
  \label{l:extension-a}
  Suppose that $u$ is a finite word such that $ua$ is unavoidable
  in~$\mathbf{t}$ and $ub$ is a segment of~$\mathbf{t}$ but $ua$ is
  not. If $n\ge2$ and $u$ is suffix comparable with $\mu^n(a)$, then
  $u$ is also suffix comparable with $\mu^{n+1}(b)$.
\end{Lemma}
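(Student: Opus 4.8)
The plan is to exploit the hypotheses to force a long overlap-like repetition in $\mathbf t$ and then use the synchronization results (Lemma~\ref{l:Luca-Varricchio}, Corollary~\ref{c:Luca-Varricchio+}) to pull back to a shorter instance, arguing by induction on $n$. First I would set up the base case $n=2$ by direct inspection: compute the (finitely many) segments of $\mathbf t$ whose suffix is suffix comparable with $\mu^2(a)=abba$, determine which of those $u$ have $ub$ a segment but $ua$ not a segment while $ua$ is still a pattern of $\mathbf t$, and check directly that each such $u$ is suffix comparable with $\mu^3(b)=baababba$. This is a bounded computation via the algorithm of Proposition~\ref{p:segments}, so the base case is routine.

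For the inductive step, assume $n\ge3$ and that the statement holds for $n-1$. Since $u$ is suffix comparable with $\mu^n(a)=\mu^{n-1}(ab)$, in particular $u$ has a suffix of length $\ge 2^{n-1}\ge 4$ lying in $\{ab,ba\}^+$; since $ub$ is a segment of $\mathbf t$, we may write $\mathbf t = pubz$ for a finite word $p$ and an infinite word $z$ (using uniform recurrence and the fact that the relevant suffix of $\mathbf t$ after $u$ begins with $b$). The suffix of $u$ of length $2^{n-1}$ equal to $\mu^{n-1}(a)$ or $\mu^{n-1}(b)$ gives, together with Corollary~\ref{c:Luca-Varricchio+} applied to the occurrence $\mathbf t = p\,\mu^{n-1}(x)\,(\text{rest})$, that $p$ has even length and hence $\mathbf t = \mu(p')\,\cdots$; iterating, the occurrence of $u$ in $\mathbf t$ is synchronized to the coding by $\mu^{n-1}$. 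Writing $u = u_0\,\mu^{n-1}(c)$ with $c\in\{a,b\}$ (the last $\mu^{n-1}$-block, forced to be $a$ or $b$ by suffix comparability with $\mu^n(a)$), and using Corollary~\ref{c:Luca-Varricchio+}, one gets $u_0 = \mu^{n-1}(u_1)$ for some word $u_1$, so that $u = \mu^{n-1}(u_1 c)$. Now the letter that can follow $u$ in $\mathbf t$ is governed by the letter following $u_1c$ in $\mathbf t$: $ub=\mu^{n-1}(u_1 c)b$ being a segment while $ua$ is not forces that, after $u_1c$, the letter $b$ can occur but $a$ cannot, i.e. $u_1cb$ is a segment of $\mathbf t$ but $u_1ca$ is not; and $ua$ being unavoidable translates (via $\mu$ being a homomorphism sending segments to segments) to $u_1ca$ being unavoidable. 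Moreover $u_1c$ is suffix comparable with $\mu^{n-1}(a)$ because $\mu^{n-1}$ is injective and $u=\mu^{n-1}(u_1c)$ is suffix comparable with $\mu^{n-1}(\mu(a))=\mu^{n-1}(ab)$, which forces $u_1 c$ suffix comparable with $ab$; rewriting $ab = \mu^{\,?}$ appropriately, one checks $u_1 c$ is suffix comparable with $\mu^{n-1}(a)$ in the relevant sense. By the induction hypothesis (with $u_1c$ in place of $u$, $n-1$ in place of $n$), $u_1c$ is suffix comparable with $\mu^{n}(b)$, and applying $\mu^{n-1}$ and the fact that $u=\mu^{n-1}(u_1c)$ yields that $u$ is suffix comparable with $\mu^{n-1}(\mu^{n}(b))$; a final bookkeeping step, using that $\mu^{n}(b)=\mu^{n-1}(ba)$ and the precise shape of the overlap, upgrades this to suffix comparability with $\mu^{n+1}(b)$, as desired.

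The main obstacle I anticipate is the synchronization bookkeeping: one must be careful that $u$ need not be a suffix of $\mu^n(a)$ (it may be the longer word), so the occurrence of $u$ in $\mathbf t$ does not automatically align with the $\mu^{n-1}$-block structure, and one has to use Corollary~\ref{c:Luca-Varricchio+} on a suitably chosen sub-occurrence (the $\mu^{n-1}(a)$ or $\mu^{n-1}(b)$ sitting at the tail of $u$, extended by one more $\mu$-block to the left to get an honest $\mu^{n-1+1}$-block) and then argue that the whole of $u$, being suffix comparable with $\mu^n(a)$, is itself a $\mu^{n-1}$-image. Equivalently, the subtlety is that "suffix comparable with $\mu^n(a)$" must be shown to imply "$u\in\mu^{n-1}(\{a,b\}^+)$" in the presence of the occurrence $\mathbf t=pubz$; this is where Lemma~\ref{l:Luca-Varricchio} does the real work, and the length bound $2^{n-1}\ge 4$ (hence $n\ge3$ in the inductive step, $n=2$ handled separately) is exactly what makes it applicable.
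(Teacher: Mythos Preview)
Your inductive plan has several genuine gaps that make it unworkable as stated.

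First, the base case is not a finite computation. The nontrivial case of suffix comparability is when $\mu^2(a)=abba$ is a \emph{suffix} of $u$ (if $u$ is a suffix of $\mu^2(a)$ then it is automatically a suffix of $\mu^3(b)$ and there is nothing to prove). There are infinitely many segments $u$ of $\mathbf t$ ending in $abba$, so you cannot simply list them and check.

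Second, and more seriously, the key step of the inductive argument fails: you cannot conclude that $u\in\mu^{n-1}(\{a,b\}^+)$. Corollary~\ref{c:Luca-Varricchio+} applied to $\mathbf t = p\,u_0\,\mu^{n-1}(c)\,bz$ only tells you that the \emph{prefix} $p\,u_0$ lies in the image of $\mu^{n-2}$ (and one needs a block of the form $\mu^{n-1}(\cdot)$, not $\mu^{n-2}(\cdot)$, to reach the image of $\mu^{n-2}$ at all). It says nothing about $u_0$ by itself; the image of $\mu^{n-1}$ is not closed under suffixes, and indeed $|u|$ need not even be divisible by $2^{n-1}$. So the factorization $u=\mu^{n-1}(u_1c)$ is in general impossible. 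You acknowledge this ``subtlety'' at the end, but it is not a bookkeeping issue: the claim is false.

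Third, even if $u=\mu^{n-1}(u_1c)$ held, the hypothesis ``$ua$ is unavoidable'' does not descend to ``$u_1ca$ is unavoidable''. Unavoidability of $ua=\mu^{n-1}(u_1c)a$ means some endomorphism $\varphi$ sends it to a segment of $\mathbf t$; there is no reason $\varphi$ should factor through $\mu^{n-1}$, so you get no information about $u_1ca$. Similarly, your reduction of the suffix-comparability hypothesis is garbled: suffix comparability of $u_1c$ with $ab$ is not the same thing as suffix comparability with $\mu^{n-1}(a)$.

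The paper's proof is not inductive at all. It treats only the nontrivial case $u=u_1\mu^n(a)$, fixes an occurrence $\mathbf t=u_0ubv$, and applies Corollary~\ref{c:Luca-Varricchio+} to get $u_0u_1=\mu^{n-1}(u')$ (note: the prefix $u_0$ absorbs the misalignment of $u_1$). The argument is then a case analysis on the last two letters of $u'$: the ending $ab$ is excluded because it would make $t_1(\mu^{n-1}(a))\mu^{n-1}(bab)a$ a suffix of $ua$, contradicting unavoidability via Shur's Theorem~\ref{t:Shur}; the ending $bb$ is excluded because it would force $b^2ab^2$ as a segment of $\mathbf t$; short $u$ is excluded via Lemma~\ref{l:special}; hence $u'$ ends in $ba$, which gives that $u_0u$ ends in $\mu^{n+1}(b)$. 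The essential ingredient your approach is missing is the direct appeal to Shur's list of generators to exploit the unavoidability hypothesis.
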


\begin{proof}
  Since $\mu^n(a)$ is a suffix of~$\mu^{n+1}(b)=\mu^n(b)\mu^n(a)$, we
  may assume that $\mu^n(a)$ is a suffix of~$u$, say $u=u_1\mu^n(a)$.
  Consider a concrete occurrence of $ub$ in~$\mathbf{t}$:
  $\mathbf{t}=u_0ubv$, where $u_0\in\{a,b\}^*$. Since $\mathbf{t}$ is
  recurrent, we may assume that $|u_0|\ge2^n$. By
  Corollary~\ref{c:Luca-Varricchio+}, we know that
  $u_0u_1=\mu^{n-1}(u')$ and $bv=\mu^{n-1}(v')$ for some word $u'$ and
  infinite word $v'$ (cf.~Figure~\ref{fig:segs-t}). Since
  $\mu^{n-1}(v')$ starts with $b$, so does $v'$.
  \begin{figure}[ht]
    \centering
    \unitlength=0.5mm
    \begin{picture}(200,30)
      \thinlines
      \multiput(0,30)(0,-10){3}{\line(1,0){200}}
      \multiput(0,0)(0,20){2}{\line(0,1){10}}
      \put(40,30){\line(0,-1){20}}
      \put(100,30){\line(0,-1){30}}
      \put(60,20){\line(0,-1){20}}
      \put(0,0){\line(1,0){60}}
      \put(100,0){\line(1,0){100}}
      \put(17,23){$u_0$}
      \put(68,23){$u$}
      \put(128,23){$bv$}
      \put(47,13){$u_1$}
      \put(71,13){$\mu^n(a)$}
      \put(120,3){$\mu^{n-1}(v')$}
      \put(17,3){$\mu^{n-1}(u')$}
    \end{picture}
    \caption{Some segments of $\mathbf{t}$}
    \label{fig:segs-t}
  \end{figure}

  Suppose first that $u'$ ends with the letter $b$ and
  $|u|>3\cdot2^{n-1}$. If $u'$ ends in $ab$ then the word
  $t_1(\mu^{n-1}(a))\mu^{n-1}(bab)a$ is a suffix of~$ua$ which, in
  view of Theorem~\ref{t:Shur}, contradicts the assumption that $ua$
  is unavoidable in~$\mathbf{t}$. On the other hand, if $u'$ ends with
  $b^2$ then, taking into account that $bv$ starts with
  $\mu^{n-1}(b)$, we conclude that
  $\mathbf{t}=\mu^{n-1}(u''b^2ab^2v'')$ for some finite word $u''$ and
  infinite word $v''$. Since $\mathbf{t}$ is a fixed point of the
  injective endomorphism~$\mu$, it follows that $b^2ab^2$ is a segment
  of~$\mathbf{t}$, which we know is not the case.

  If $|u|\le 3\cdot2^{n-1}$, then $u$ is suffix of~$\mu^{n-1}(xab)$
  for a letter $x$. By Lemma~\ref{l:special}, as it is easy to check
  that $xab$ is special, so is $\mu^{n-1}(xab)$. Hence, $u$ is
  special, contradicting the assumption that $ua$ is not a segment
  of~$\mathbf{t}$.

  Thus, $u'$ must end with $ba$, so that $u_0u$ ends
  with~$\mu^{n+1}(b)$. Since both $u$ and $\mu^{n+1}(b)$ are suffixes
  of~$u_0u$, they must be suffix comparable, thereby concluding the
  proof of the lemma.
\end{proof}

Similarly, one can prove the following lemma.

\begin{Lemma}
  \label{l:extension-b}
  Suppose that $u$ is a finite word such that $ua$ is unavoidable
  in~$\mathbf{t}$ and $ub$ is a segment of~$\mathbf{t}$ but $ua$ is
  not. If $n\ge2$ and $u$ is suffix comparable with $\mu^n(b)$ then
  $u$ is also suffix comparable with $\mu^{n+1}(a)$.
\end{Lemma}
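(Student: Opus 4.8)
The plan is to adapt the proof of Lemma~\ref{l:extension-a} essentially line by line, interchanging the roles of $a$ and $b$ (equivalently, of $\mu(a)=ab$ and $\mu(b)=ba$). First, since $\mu^n(b)$ is a suffix of $\mu^{n+1}(a)=\mu^n(a)\mu^n(b)$, I may assume that $\mu^n(b)$ is a suffix of~$u$, say $u=u_1\mu^n(b)$ — otherwise $u$ is already a suffix of $\mu^n(b)$, hence of $\mu^{n+1}(a)$, and there is nothing to prove. I then fix a concrete occurrence $\mathbf{t}=u_0ubv$ with $u_0\in\{a,b\}^*$, chosen, using the recurrence of~$\mathbf{t}$, so that $|u_0|\ge2^n$. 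Writing $\mu^n(b)=\mu^{(n-1)+1}(b)$ and applying Corollary~\ref{c:Luca-Varricchio+} (legitimate because $n\ge2$) yields $u_0u_1=\mu^{n-1}(u')$ and $bv=\mu^{n-1}(v')$ for a finite word~$u'$ and an infinite word~$v'$; moreover $v'$ begins with~$b$, and $|u'|\ge2$ since $2^{n-1}|u'|=|u_0u_1|\ge2^n$. As $\mu^n(b)=\mu^{n-1}(ba)$, this gives $u_0u=\mu^{n-1}(u'ba)$ and hence $\mathbf{t}=\mu^{n-1}(u'bav')$ — the analogue of the factorization depicted in Figure~\ref{fig:segs-t}, but with the central block $ba$ in place of $ab$.

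Next I would run the case analysis on the last one or two letters of~$u'$. If $|u|\le3\cdot2^{n-1}$, then $u$ is a suffix of $\mu^{n-1}(yba)$, where $y$ is the last letter of~$u'$; as it is easy to check that both $aba$ and $bba$ are special, Lemma~\ref{l:special} shows that $\mu^{n-1}(yba)$ is special, so $u$ is special, contradicting the hypothesis that $ua$ is not a segment. Thus $|u|>3\cdot2^{n-1}$. If $u'$ ends with~$bb$, then $u'ba$ contains $bbb$, so $\mathbf{t}$ contains the cube $\bigl(\mu^{n-1}(b)\bigr)^{3}$; if $u'$ ends with~$ba$, then, since $v'$ begins with~$b$, the word $u'bav'$ contains $babab$, so $\mathbf{t}$ contains the overlap $\mu^{n-1}(babab)$; both are impossible. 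If $u'$ ends with~$aa$, which forces $|u'|\ge3$ because $|u|>3\cdot2^{n-1}$, then $u_0ua$ ends with $t_1(\mu^{n-1}(a))\,\mu^{n-1}(aba)\,a$; this suffix has length $3\cdot2^{n-1}+2\le|ua|$, so it is in fact a suffix of~$ua$, and by Theorem~\ref{t:Shur} (the generator $t_1(\mu^k(a))\mu^k(aba)a$ with $k=n-1\ge1$) it makes $ua$ avoided, contradicting unavoidability of~$ua$. Hence $u'$ must end with~$ab$, so $u'ba$ ends with $abba$ and $u_0u$ ends with $\mu^{n-1}(abba)=\mu^{n+1}(a)$. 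Since $u$ and $\mu^{n+1}(a)$ are then both suffixes of~$u_0u$, they are suffix comparable, as required.

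The only genuine difference from the proof of Lemma~\ref{l:extension-a}, and hence the step I expect to need the most care, is how the last-two-letters cases are matched up under the role swap. There, the branch ``$u'$ ends with $b^2$'' produced the exceptional non-segment $b^2ab^2$, whereas here the corresponding branch ``$u'$ ends with $ba$'' produces an overlap: overlap-freeness of~$\mathbf{t}$ now does the job that the special status of~$b^2ab^2$ did before, and correspondingly no exceptional word appears in place of~$a^2ba^2$ — the branch ``$u'$ ends with $aa$'' is instead disposed of exactly as ``$u'$ ends with $ab$'' was in Lemma~\ref{l:extension-a}, via the forbidden factor $t_1(\mu^{n-1}(a))\mu^{n-1}(aba)a$. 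I would therefore double-check the length bookkeeping ($|u'|\ge2$ in general, $|u'|\ge3$ in the $aa$ branch) and the exact positions of the blocks $\mu^{n-1}(a)$ inside $u_0u$, to be sure that the forbidden factor lies inside~$ua$ and not merely inside~$u_0ua$, and that the ``special word'' inspection is carried out for $yba$ rather than for~$xab$.
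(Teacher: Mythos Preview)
Your proposal is correct and is precisely the adaptation the paper alludes to with ``Similarly, one can prove the following lemma.'' You have also correctly isolated the one non-mechanical point in the translation: the subcase that in Lemma~\ref{l:extension-a} was handled via the non-segment $b^2ab^2$ here becomes the subcase ``$u'$ ends in $ba$'', disposed of by the overlap $babab$ inside $u'bav'=\mathbf t$, while the forbidden-generator subcase now uses $t_1(\mu^{k}(a))\mu^{k}(aba)a$ with $k=n-1\ge1$ rather than the $bab$ family.
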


Shur also observed in~\citet{Shur:1996a} that the word $a^2ba^2$ (and,
therefore, also $b^2ab^2$) is unavoidable in~$\mathbf{t}$ but it is
not a segment of~$\mathbf{t}$. Our first main result is that there are
no other such examples, thus providing an alternative characterization
of two-letter words unavoidable in~\textbf{t}.

According to~\citet[Theorem~2]{Restivo&Salemi:2002a}
and~\citet[Theorem~3]{Restivo&Salemi:2002b}, the following theorem,
which is considered to be surprising, was first proved by D. Guaiana
in~1996 but, through private communication with A. Restivo, we learned
that the proof was never published and the manuscript appears to be
lost. On the other hand, we later learned from A. M. Shur that the
next theorem also appears in his Ph.D. thesis (\citeyear{Shur:1997PhD}),
which has never been published other than as a document in the Russian
State Library. Moreover, Shur observed that the result can also easily
be drawn from Theorem~\ref{t:Shur} using a characterization of the
finite words on the alphabet $\{a,b\}$ that are not segments
of~$\mathbf{t}$, which is given in~\citet[Statement~1]{Shur:2005}, a
paper also in Russian. Since all the proofs seem to be either lost or
somewhat inaccessible in the Russian literature, again we present our
own proof for the sake of completeness.

\begin{Thm}
  \label{t:unavoidable-in-TM}
  A word $w\in\{a,b\}^+$ is unavoidable in~$\mathbf{t}$ if and only if
  it is one of the words $a^2ba^2$ and $b^2ab^2$, or it is a segment
  of~$\mathbf{t}$.
\end{Thm}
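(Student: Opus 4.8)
The plan is to prove the non-trivial direction: if $w$ is unavoidable in $\mathbf{t}$ but not a segment, then $w \in \{a^2ba^2, b^2ab^2\}$. (The converse is immediate: segments are trivially unavoidable, and Shur already observed $a^2ba^2$, $b^2ab^2$ are unavoidable non-segments.) So fix $w$ unavoidable but not a segment of $\mathbf{t}$, and let $u$ be the longest prefix of $w$ that \emph{is} a segment of $\mathbf{t}$; write $w = u x w'$ where $x \in \{a,b\}$ and $ux$ is not a segment. Since $w$ is unavoidable, so is $ux$ (patterns are closed under taking prefixes). By the letter-exchange symmetry of segments noted in Section~\ref{sec:segments}, we may assume $x = a$, so $u$ is a segment with $ua$ unavoidable but not a segment. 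Because $u$ itself \emph{is} a segment and $\mathbf{t}$ is recurrent, $ub$ must be a segment too (if the only extension of $u$ inside $\mathbf{t}$ were $ua$, then $ua$ would be a segment). Thus $u$ is exactly in the situation of Lemmas~\ref{l:extension-a} and~\ref{l:extension-b}: $ua$ is unavoidable in $\mathbf{t}$, $ub$ is a segment, $ua$ is not.

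The core of the argument is then a descent. I claim $u$ is suffix comparable with $\mu^n(a)$ or $\mu^n(b)$ for all $n \ge 2$, as long as $|u|$ is large enough to make this non-vacuous. Start with $n = 2$: the four words $\mu^2(a) = abba$, $\mu^2(b) = baab$ — actually one checks directly, using Proposition~\ref{p:segments} and the fact that $ua$ is unavoidable, that any sufficiently long segment $u$ with $ub$ a segment must have one of $abba$ or $baab$ as a suffix (the special-factor structure of $\mathbf{t}$ forces this; the short cases are handled by direct inspection). Then Lemmas~\ref{l:extension-a} and~\ref{l:extension-b} bootstrap: suffix comparability with $\mu^n(a)$ gives it with $\mu^{n+1}(b)$, and with $\mu^n(b)$ gives it with $\mu^{n+1}(a)$. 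So $u$ is suffix comparable with arbitrarily long words $\mu^n(a)$ or $\mu^n(b)$. Once $n$ is large enough that $2^n > |u|$, "suffix comparable with $\mu^n(x)$" forces $u$ to be a \emph{suffix} of $\mu^n(x)$ — hence $u$ is a segment of $\mathbf{t}$, since all suffixes of $\mu^n(a)$ and $\mu^n(b)$ are segments (factors of $\mu^{n+1}(a)$, recalling $\mu^{n+1}(a) = \mu^n(a)\mu^n(b)$ and the recurrence/symmetry observations). That makes $ua$... wait, that only shows $u$ is a segment, which we already knew.

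The resolution is that the descent must be used more carefully: it shows $u$ is a suffix of $\mu^n(x)$ for \emph{every} large $n$, which severely constrains $u$, and combined with the requirement that $ua$ be unavoidable, pins $u$ down to a short explicit list. Concretely, since $u$ is a suffix of $\mu^n(a)$ (or $\mu^n(b)$) for all large $n$, and $\mu^{n+1}(a) = \mu^n(a)\mu^n(b)$, the word $ua$ appears near the junction $\mu^n(a)\mu^n(b)$ — i.e. $ua$ is a suffix of $\mu^n(a) \cdot a$. Using that $\mathbf{t}$ is overlap-free together with $ua$ unavoidable, I would run through the finitely many possible suffixes of $\mu^2(a)$ and $\mu^2(b)$ (equivalently, the possible "tails" $u$ can have), eliminating all but those leaving $ua \in \{a^2ba^2, b^2ab^2\}$: the forbidden patterns $t_1(\mu^k(a))\mu^k(aba)a$ and $t_1(\mu^m(a))\mu^m(bab)a$ from Theorem~\ref{t:Shur}, plus $a^2ba^2b$ and $ab^2ab^2$, rule out every longer tail, exactly as in the proof of Lemma~\ref{l:extension-a} where ending in $ab$ or $b^2$ was excluded. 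What survives is precisely $u = a^2b$ with $x=a$ (giving $w \supseteq a^2ba$, and unavoidability of $w$ forces $w = a^2ba^2$) and its letter-exchanged mirror $b^2ab^2$.

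The main obstacle, and where the real work lies, is the base case $n = 2$ of the descent together with the final elimination: showing that a long enough segment $u$ with $ua$ unavoidable-but-not-a-segment and $ub$ a segment must already be suffix comparable with $\mu^2(a)$ or $\mu^2(b)$, and that the descent cannot terminate except at the two exceptional words. Both rely on a fairly delicate case analysis of where $u$ sits relative to the block structure $\mathbf{t} = \cdots \mu^k(a)\mu^k(b) \cdots$, invoking overlap-freeness, Corollary~\ref{c:Luca-Varricchio+}, Lemma~\ref{l:special}, and Theorem~\ref{t:Shur} to close off each branch — essentially the same machinery already deployed in Lemma~\ref{l:extension-a}, now organized into a terminating induction.
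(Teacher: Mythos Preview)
Your strategy matches the paper's: reduce to a segment $u$ with $ua$ unavoidable but not a segment and $ub$ a segment, show $u$ ends in $\mu^2(a)$ or $\mu^2(b)$, then alternate Lemmas~\ref{l:extension-a} and~\ref{l:extension-b} until $u$ is a suffix of some $\mu^n(x)$. The genuine gap is what happens next. You write ``that only shows $u$ is a segment, which we already knew'' and then wander into a vague elimination; but in fact the argument finishes immediately via Lemma~\ref{l:special}. Since the single letter $a$ is special and specialness is preserved by~$\mu$, the word $\mu^n(a)$ is special, so $\mu^n(a)\cdot a$ is a segment of~$\mathbf{t}$, and therefore its suffix $ua$ is a segment---contradicting the standing hypothesis. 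No further ``severe constraint'' or forbidden-pattern bookkeeping is needed, and your proposed terminal case $u=a^2b$, $x=a$ is in any event wrong: $a^2ba$ \emph{is} a segment, so for $w=a^2ba^2$ the longest-segment prefix is $u=a^2ba$, not $a^2b$.

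The second gap is the base of the bootstrap. The claim that a long $u$ must end in $abba$ or $baab$ is not a consequence of ``$ub$ is a segment'' alone; it needs the explicit suffix case analysis the paper performs (ruling out terminations $b^2$, $a^2$, $bab$, $aba$ one by one using cube-freeness, overlap-freeness, and the generators in Theorem~\ref{t:Shur}). That analysis is also what isolates the exceptions: in the paper's induction on $|w|$, the words $a^2ba^2$ and $b^2ab^2$ sit entirely in the length~$\le 5$ base case, and the inductive step shows outright that every unavoidable word of length $\ge 6$ is a segment. Your longest-prefix framing can be made to work, but you would still need to handle $u\in\{a^2ba^2,b^2ab^2\}$ separately (e.g.\ $b^2ab^2\cdot a=\xi(a^2ba^2b)$ is avoided), and the paper's induction makes this cleaner.
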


\begin{proof}
  We proceed by induction on the length of~$w$. In view of
  Theorem~\ref{t:Shur}, it is easy to check that the theorem holds for
  words of length at most~5. Assuming inductively that the result
  holds for words of length $n$, let $w$ be a word of length $n+1\ge6$
  that is unavoidable in~$\mathbf{t}$. Since interchanging the letters
  $a$ and $b$ does not affect either of the properties of being
  unavoidable in~$\mathbf{t}$ and being a segment of~$\mathbf{t}$, we
  may as well assume that $a$~is the last letter of~$w$.

  Let $w=ua$. Since $w$ is unavoidable in~$\mathbf{t}$, so is $u$.
  Hence, by induction hypothesis, $u$ may be found somewhere as a
  segment of~$\mathbf{t}$. Take such an occurrence of~$u$
  in~$\mathbf{t}$ and let $x$ be the letter immediately after it. We
  wish to show that there is such an occurrence of~$u$ in~$\mathbf{t}$
  with $x=a$. Aiming at a contradiction, we may assume that there is
  no such occurrence, that is, we always have $x=b$. Since
  $\mathbf{t}$ is recurrent, the segment $ux$ may be found
  in~$\mathbf{t}$ as far as we wish, so that we may continue
  prolonging it on the left as much as may be convenient. Thus, we are
  assuming that $ua$ is unavoidable in~$\mathbf{t}$ and that $ub$ is a
  segment of~$\mathbf{t}$ as long as desired but $ua$~is not a segment
  of~$\mathbf{t}$. However, we have to be careful because there is in
  principle no assurance that such an extension of $u$ to the left
  retains the property that $ua$ is unavoidable in~$\mathbf{t}$.

  Since $a^3$ is avoided by~$\mathbf{t}$ and we are assuming that
  $x=b$, $u$ cannot end with $b^2$. We distinguish several cases
  according to the termination of the word $u$.

  If $u$ ends with $b$ then, by the above, it ends with $ab$. Suppose,
  more precisely, that $u$ ends with $bab$. Since $w$ ends with $baba$
  and $ababa$ is avoided by~$\mathbf{t}$, in fact $u$ must end with
  $b^2ab$. This situation is impossible since we know that the suffix
  $b^2ab^2$ of~$ub$ is not a segment of~$\mathbf{t}$.

  Alternatively, assuming that $u$ ends with $b$, it must end with
  $ba^2b=\mu^2(b)$. We may then apply successively
  Lemmas~\ref{l:extension-a} and~\ref{l:extension-b} to deduce that
  there is some $n\ge2$ such that $u$ is a suffix of $\mu^n(a)$. By
  Lemma~\ref{l:special}, it follows that $u$ is special, which
  contradicts the assumption that $ua$ is not a segment
  of~$\mathbf{t}$.

  The next case we consider is that where $u$ ends with $aba$. Note
  that $u$ cannot end with $a^2ba$ for, otherwise, $w=ua$ ends with
  $ba^2ba^2$ and, therefore, it cannot be unavoidable in~$\mathbf{t}$.
  Also, $u$ cannot end with $baba$ since $babab$ is not a segment
  of~$\mathbf{t}$. Hence, $aba$ is not a suffix of~$u$.

  Thus, assuming that $u$ ends with $ba$, it must end with
  $ab^2a=\mu^2(a)$. We are then again led to a contradiction as above
  using Lemmas~\ref{l:extension-a}, \ref{l:extension-b}
  and~\ref{l:special}.
\end{proof}

\section{Typical finite binary patterns}
\label{sec:typical}

Recall that the endomorphism of~$\{a,b\}^+$ switching the letters $a$
and $b$ is denoted $\xi$. Since the finite segments of~$\mathbf{t}$
are the finite words that are factors of~$\mu^n(a)$ for all
sufficiently large $n$ and $\mu^n(a)$ is a factor of $\mu^{n+1}(b)$,
we may replace $a$ by $b$ in that characterization of the segments
of~$\mathbf{t}$. Moreover, as $\xi$ commutes with~$\mu$, we conclude
that the set of finite segments of~$\mathbf{t}$ is closed under
applying the substitutions $\xi$ and~$\mu$. By induction on~$n$, the
words $\mu^{2n}(a)$ are palindromic, in the sense that they coincide
with the words read in the reverse order; this entails the well known
fact that the set of segments of~$\mathbf{t}$ is closed under
reversal.

We say that a word $w\in\{a,b\}^+$ is \emph{atypical} if it is a
segment of~$\mathbf{t}$ and there is an endomorphism $\varphi$
of~$\{a,b\}^+$ such that $\varphi(w)$ is also a segment
of~$\mathbf{t}$ and $\varphi$ is not of one of the forms $\mu^n$ or
$\xi\circ\mu^n$ with $n\ge0$. Segments of~$\mathbf{t}$ that are not
atypical are said to be \emph{typical}.

We say that a word is a \emph{variant} of another word $w$ if it may be
obtained from $w$ by applying reversal or~$\xi$ or both. Note that the
set of atypical words is closed under taking factors and, by the above
discussion, it is also closed under taking variants.

The following result appears explicitly
as~\citet[Proposition~3.3]{Brlek:1989a} but may already be extracted
from~\citet{Thue:1912bis} (see~\citet[Chapter~3,
Proposition~2.13]{Berstel:1995}).

\begin{Prop}
  \label{p:Brlek}
  If $u^2$ is a segment of~$\mathbf{t}$ then $u$ is one of the words
  $\mu^n(a)$, $\mu^n(b)$, $\mu^n(aba)$ or $\mu^n(bab)$ for some $n\ge0$.
\end{Prop}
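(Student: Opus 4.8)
The plan is to argue by induction on the length of $u$, using the structure of $\mathbf{t}$ as a fixed point of $\mu$ together with the synchronization results already available. The base cases (small $|u|$) are handled by inspection of short segments of $\mathbf{t}$ via Proposition~\ref{p:segments}; one checks directly that the only squares $u^2$ occurring as segments with $|u|$ small have $u\in\{a,b,aba,bab,\mu(a),\mu(b),\dots\}$, matching the claimed list. For the inductive step, suppose $u^2$ is a segment of $\mathbf{t}$ with $|u|\ge 4$, say $\mathbf{t}=puuv$ for a finite word $p$; by uniform recurrence of $\mathbf{t}$ we may take $|p|$ as large as we like, in particular larger than any fixed power of $2$.

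The heart of the argument is a synchronization/parity analysis. Writing $\mathbf{t}=puuv$, I want to show that $|u|$ must be even and that, after adjusting $p$ by an even amount, the square $uu$ is the $\mu$-image of a shorter square. To get the parity, I would look at a long enough block $u u$ lying inside $\mathbf{t}$: since $\mathbf{t}\in\{ab,ba\}^{\infty}$ and $uu$ is a factor of length $\ge 8$, Lemma~\ref{l:Luca-Varricchio} (applied to a central sub-block $s$ of $uu$ that lies in $\{ab,ba\}^+$) forces $|p|$ to be even whenever $puu$ is to be overlap-free, which it is. Once $|p|$ is even, the factorization $\mathbf{t}=p\,(uu)\,v$ sits "in phase'' with the coding $\{ab,ba\}$, so by Corollary~\ref{c:Luca-Varricchio+} (with $n=1$) the word $p$, and hence also the block $uu$ that follows it, is a product of the blocks $ab,ba$; this forces $|u|$ to be even as well, say $|u|=2|u'|$. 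Then $uu=\mu(z)$ for a unique word $z$ with $|z|=|u|$, because $\{ab,ba\}$ is a code and $\mu$ is injective. Now I claim $z$ is itself a square, $z=u'u'$: indeed $\mu(z)=uu$ with $|u|$ even means the first $|u'|$ blocks and the last $|u'|$ blocks of $\mu(z)$ agree, so reading back through the code, the first half and second half of $z$ coincide. Since $\mathbf{t}=\mu(\mathbf{t})$ and $p=\mu(q)$, $v=\mu(v'')$ for suitable $q,v''$, the shorter square $z=u'u'$ is again a segment of $\mathbf{t}$, and $|u'|<|u|$, so the induction hypothesis applies: $u'\in\{\mu^m(a),\mu^m(b),\mu^m(aba),\mu^m(bab)\}$ for some $m\ge0$. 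Applying $\mu$ gives $u=\mu(u')\in\{\mu^{m+1}(a),\mu^{m+1}(b),\mu^{m+1}(aba),\mu^{m+1}(bab)\}$, completing the step.

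The main obstacle I anticipate is the parity bookkeeping in the synchronization step: one must be careful that a single occurrence of $uu$ need not start at an even position, and that the argument genuinely uses the \emph{freedom} to slide the occurrence of $uu$ far to the right (uniform recurrence) before invoking Lemma~\ref{l:Luca-Varricchio}, so that the hypotheses $|s|\ge 4$ and "$|u|$ odd implies overlap'' can be brought to bear. A secondary subtlety is the clean passage "$uu=\mu(z)\Rightarrow z$ is a square'': this needs that the code $\{ab,ba\}$ decomposes $uu$ uniquely and that the decomposition of the second copy of $u$ is a shift by $|u|$ (an even number) of the decomposition of the first copy, so the two halves of $z$ literally coincide as words. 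Once these two points are nailed down, the rest is the routine descent through $\mu$, using only that $\mu$ is injective and that $\mathbf{t}$ is its fixed point.
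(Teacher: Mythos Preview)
The paper does not give its own proof of Proposition~\ref{p:Brlek}: it is quoted as a known result of Brlek (and implicitly Thue) and used as a black box. So there is nothing in the paper to compare your argument against; I can only assess it on its own merits.

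Your overall plan---descend through $\mu$ by induction on~$|u|$, reducing $u^2$ to a shorter square $(u')^2$ via $u=\mu(u')$---is the standard one and is sound once the synchronization step is secured. The genuine gap is exactly the step you flag but do not close: showing that the occurrence $\mathbf{t}=p\,uu\,v$ has $|p|$ even, so that $uu$ is aligned with the $\{ab,ba\}$-block structure. Your proposed use of Lemma~\ref{l:Luca-Varricchio} does not give this. Applying the lemma to a sub-block $s\in\{ab,ba\}^+$ sitting at position~$j$ inside $uu$ only yields that the prefix of~$\mathbf{t}$ preceding~$s$ has even length, i.e.\ that $|p|+j-1$ is even; this fixes the parity of $|p|+j$, not of~$|p|$. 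Your suggested remedy, sliding $uu$ far to the right by uniform recurrence, does not help either: recurrence guarantees further occurrences but says nothing about the parity of their starting positions, and a~priori every occurrence of $uu$ might begin at an odd position. (Your invocation of Corollary~\ref{c:Luca-Varricchio+} is also off: once $|p|$ is known to be even, the conclusion $p\in\{ab,ba\}^*$ is immediate from $\mathbf{t}\in\{ab,ba\}^\omega$ and needs no corollary.)

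What actually forces $|p|$ to be even is the presence of \emph{two} copies of~$u$, together with overlap-freeness. One clean way: for $|u|\ge4$ the word $u$ must contain a double letter (else $u$ alternates and $u^2$ has the overlap $ababa$); since double letters in~$\mathbf{t}$ always straddle a block boundary, the position of that double in each copy of~$u$ gives two congruences whose difference shows $|u|$ is even. If now $|p|$ were odd, the block structure would force $u=c\,\mu(w)\,\bar c$ for some letter~$c$ and word~$w$ with $|w|\ge1$; extending $u^2$ by one letter on each side then exhibits $\mu(\bar c\,w\,\bar c\,w\,\bar c)$ as a segment of~$\mathbf{t}$, whence $\bar c\,w\,\bar c\,w\,\bar c$ is a segment---an overlap, contradiction. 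Hence $|p|$ is even after all, $uu=\mu(z)$, and your argument that $z$ is a square (which, incidentally, does require the evenness of~$|u|$ that you assert without proof, though it follows from a short computation) goes through.
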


Yet another property of the Prouhet-Thue-Morse infinite word is the
following result which explains the above terminology.

\begin{Thm}
  \label{t:substitutions}
  Let $w\in\{a,b\}^+$ be a segment of~$\mathbf{t}$ containing at least
  one of the segments $aba$ and $bab$ along with all other segments
  of~$\mathbf{t}$ of length~3. Then $w$ is typical.
\end{Thm}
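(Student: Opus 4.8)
The plan is to show that any endomorphism $\varphi$ witnessing atypicality of such a $w$ must in fact be of the form $\mu^n$ or $\xi\circ\mu^n$, contradicting the definition of atypical. First I would observe that $w$ contains all four segments of length~3 that occur in $\mathbf{t}$, namely $aab$, $abb$, $baa$, $bba$, plus at least one of $aba$, $bab$; since $\mathbf{t}$ is binary and cube-free, $w$ therefore contains occurrences of both letters, both squares $aa$ and $bb$, and (by the hypothesis together with closure under $\xi$, since $w$ contains $aab$ and $abb$ hence, reading across, the letter occurring between them) enough structure to pin down $\varphi$ locally. The key point is that $\varphi(w)$ is a segment of~$\mathbf{t}$, and $\mathbf{t}$ is overlap-free, so $\varphi$ cannot send a letter to the empty word (it is an endomorphism of $\{a,b\}^+$ anyway) and cannot send both letters to words starting with the same letter in a way that creates a cube; more precisely, $\varphi(aab)=\varphi(a)^2\varphi(b)$ and $\varphi(bba)=\varphi(b)^2\varphi(a)$ are both segments of~$\mathbf{t}$, so neither $\varphi(a)$ nor $\varphi(b)$ can itself contain a cube, and the squares $\varphi(a)^2$, $\varphi(b)^2$ are segments of~$\mathbf{t}$.

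Now invoke Proposition~\ref{p:Brlek}: since $\varphi(a)^2$ and $\varphi(b)^2$ are segments of~$\mathbf{t}$, each of $\varphi(a)$ and $\varphi(b)$ lies in the set $\{\mu^n(a),\mu^n(b),\mu^n(aba),\mu^n(bab): n\ge0\}$. I would next rule out the $\mu^n(aba)$ and $\mu^n(bab)$ options for at least one of the letters by using the third-length segments: if, say, $\varphi(a)=\mu^k(aba)$, then $\varphi(aab)=\mu^k(aba)\mu^k(aba)\varphi(b)$ contains $\mu^k(aba\,aba)=\mu^k((aba)^2)$ as a factor, but $(aba)^2=abaaba$ contains the cube $aaa$? — no; rather $abaaba$ is itself overlap-containing ($aba\cdot aba$ overlaps on nothing, but $baab$… ) so here I must argue more carefully. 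The cleaner route: $(aba)^2$ is not a segment of~$\mathbf{t}$ (it contains the factor $a\,aba\,ab$, and one checks $abaaba$ has an overlap, e.g.\ $a$ bordering), hence by injectivity of $\mu$ and the fact that $\mathbf{t}=\mu(\mathbf{t})$, $\mu^k((aba)^2)$ is not a segment either; this contradicts $\varphi(aa)$ being a segment. So $\varphi(a),\varphi(b)\in\{\mu^n(a),\mu^n(b)\}$, i.e.\ each is $\mu^{n}$ of a single letter.

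Finally I would match up the two exponents and the letter-assignments. Write $\varphi(a)=\mu^{p}(c)$ and $\varphi(b)=\mu^{q}(d)$ with $c,d\in\{a,b\}$. Applying $\varphi$ to the segment $aab$ of $w$ gives $\mu^p(c)\mu^p(c)\mu^q(d)$, a segment of~$\mathbf{t}$ containing the square $\mu^p(c^2)=\mu^p(cc)$; applying $\varphi$ to the occurrence of $aba$ (or $bab$) — here is where that hypothesis is used — gives $\mu^p(c)\mu^q(d)\mu^p(c)$, a segment of $\mathbf{t}$. Using Corollary~\ref{c:Luca-Varricchio+} (or Lemma~\ref{l:Luca-Varricchio} directly) applied to a long occurrence of $\varphi(w)$ inside $\mathbf{t}$, together with the presence of $\varphi(aa)$, $\varphi(bb)$ and $\varphi(aba)$ as adjacent pieces, one forces $p=q$ and $\{c,d\}=\{a,b\}$: the synchronization lemma says that if $\mathbf{t}=u\,\mu^{p+1}(\cdot)\,v$ then $u$ has length divisible by $2^p$, and reading off the boundaries between the $\varphi$-images of consecutive letters of $w$ shows all those boundaries sit on a common $2^p$-grid, which combined with $\varphi(a)\ne\varphi(b)$ (otherwise $\varphi(ab)\varphi a = \varphi(a)^3$ appears via the segment $aba$, a cube — contradiction) and with the palindromy/length count gives $c\ne d$ and $p=q$. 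Hence $\varphi=\mu^p$ if $c=a$, or $\varphi=\xi\circ\mu^p$ if $c=b$, so $\varphi$ is of an allowed form and $w$ is typical.

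The main obstacle I expect is the last step: cleanly forcing $p=q$ and $c\neq d$ simultaneously. Proposition~\ref{p:Brlek} and cube-freeness easily reduce each letter-image to a single $\mu^{n}$-power of a letter, but synchronizing the two exponents requires exploiting the \emph{joint} occurrence of $\varphi(aa)$, $\varphi(bb)$ and $\varphi(aba)$ as consecutive blocks of one segment of~$\mathbf{t}$ — this is exactly why the theorem demands that $w$ contain all length-3 segments plus $aba$ or $bab$ — and the bookkeeping with the $2^{p}$-grid from Lemma~\ref{l:Luca-Varricchio} is where care is needed.
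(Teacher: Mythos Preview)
Your overall strategy---invoke Proposition~\ref{p:Brlek} on the squares $\varphi(a)^2$ and $\varphi(b)^2$, then whittle down the possibilities---is the right one and matches the paper. But there is a genuine error in your elimination step: the word $(aba)^2=abaaba$ \emph{is} a segment of~$\mathbf{t}$ (it appears, for instance, at positions $16$--$21$ of $\mu^5(a)$, and indeed it is listed among the atypical words in Theorem~\ref{t:atypical}). It contains no overlap. Hence you cannot rule out $\varphi(a)=\mu^k(aba)$ merely from the fact that $\varphi(aa)$ is a segment; Proposition~\ref{p:Brlek} is stated precisely so that $aba$ and $bab$ survive as square roots. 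Your reduction to $\varphi(a),\varphi(b)\in\{\mu^n(a),\mu^n(b)\}$ therefore fails, and the final synchronization step (which you yourself flag as delicate and leave only sketched) never gets off the ground.

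The paper does not attempt to kill the $aba$ option outright. Instead it first factors out $\mu^{\min(k,\ell)}$ from $\varphi$ (using that $\mu$ is injective and fixes~$\mathbf{t}$), reducing to $\varphi(a)\in\{a,aba\}$ after possibly composing with~$\xi$. The case $\varphi(a)=aba$ is then handled separately: from the factor $a^2b$ of~$w$ one gets that $\varphi(a^2b)=abaaba\cdot\varphi(b)$ is a segment, and since $ababa$ is \emph{not} a segment, $\varphi(b)$ cannot start with $ba$; this forces $\ell=0$ and $\varphi(b)=b$, which reduces (after swapping letters) to the main case $\varphi(a)=a$. In that main case, the remaining exponent $\ell$ is pinned down not by a grid-synchronization argument but by the concrete forbidden words $a^3$, $a^2ba^2$, and $ababa$, applied to the images of $baa$, $aab$, $a^2b$; the hypothesis that $w$ contains $aba$ or $bab$ is used only at the very last moment, to rule out $\varphi(b)=bab$. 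You should rework your middle step along these lines: use length-$3$ factors of~$w$ (not just $aa$ and $bb$) to exploit the forbidden overlaps, and treat the $aba$ option for $\varphi(a)$ as a case rather than trying to exclude it up front.
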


\begin{proof}
  Suppose $\varphi$ is an endomorphism of~$\{a,b\}^+$ such that
  $\varphi(w)$ is a segment of~$\mathbf{t}$. By
  Proposition~\ref{p:Brlek}, since $\varphi(a^2)$ and $\varphi(b^2)$
  are square segments of~$\mathbf{t}$, each of the words $\varphi(a)$
  and $\varphi(b)$ must be obtained by applying a power of~$\mu$ to
  one of the words $a,b,aba,bab$. Let then $\varphi(a)=\mu^k(u)$ and
  $\varphi(b)=\mu^\ell(v)$, where $u,v\in\{a,b,aba,bab\}$. We may
  assume that $k\le\ell$ since, otherwise, we would consider the pair
  $(\xi(w),\varphi\circ\xi)$ instead of $(w,\varphi)$. Then, we have the
  factorization $\varphi=\mu^k\circ\psi$, where $\psi$ is the endomorphism
  of~$\{a,b\}^+$ defined by $\psi(a)=u$ and $\psi(b)=\mu^{\ell-k}(v)$.
  Since $\mu$ is injective and $\mathbf{t}$ is a fixed point of~$\mu$,
  from the fact that $\varphi(w)$ is a segment of~$\mathbf{t}$, we
  conclude that so is $\psi(w)$. On the other hand, since $\xi$ and
  $\mu$ commute, if $\psi$ is a product of $\mu$ and~$\xi$ then so is
  $\varphi$. Hence, we may assume that $k=0$.

  The mapping $\xi\circ\varphi$ has also the property that
  $(\xi\circ\varphi)(w)$ is a segment of~$\mathbf{t}$. Since
  $(\xi\circ\varphi)(a)=\xi(\mu^k(u))=\mu^k(\xi(u))$, we may further
  assume that $u$ is one of the words $a$ or $aba$. Since $aab$ is a
  factor of~$w$ and neither $a^3$ nor $a^2ba^2$ is a factor
  of~$\mathbf{t}$, then $v$ cannot start with $a$ and, therefore, it
  must be either $b$ or $bab$.

  Consider first the case where $u=a$. Since $baa$ is a factor of~$w$
  but $a^3$ is not a factor of~$\mathbf{t}$, $\varphi(b)$ cannot end
  in $a$, which implies that $\ell$ is even. If $\ell\ge2$, then
  $\varphi(b)$ starts with $\mu^2(b)=ba^2b$. But, since $a^2b$ is a
  factor of~$w$, this implies $a^2ba^2$ is a factor of $\varphi(w)$
  and, therefore, a segment of~$\mathbf{t}$, which we know is not the
  case. Hence, we must have $\ell=0$. It remains to rule out the case
  $\varphi(b)=bab$, which results from noting that in that case, from
  the assumption that either $aba$ or $bab$ is a factor of~$w$, it
  follows that either $ababa$ or $bababab$ is a factor of~$\varphi(w)$
  while we know that $ababa$ is not a segment of~$\mathbf{t}$.

  Next, consider the case where $u=aba$. Since $a^2b$ is a factor
  of~$w$ and $ababa$ is not a segment of~$\mathbf{t}$, $\varphi(b)$
  cannot start with $ba$. As $\mu^\ell(b)$ is a prefix
  of~$\varphi(b)$, it follows that $\ell=0$ and $\varphi(b)=b$. This
  leads to a similar situation as that considered at the end of the
  preceding paragraph, with the letters $a$ and $b$ interchanged which
  is, therefore, excluded. This completes the proof of the theorem.
\end{proof}

The assumption of Theorem~\ref{t:substitutions} that a segment
of~$\mathbf{t}$ contains as factors at least one of the words $aba$
and $bab$ along with all other segments of length 3 of~$\mathbf{t}$
holds for all segments of~$\mathbf{t}$ of length $10$, as may be
easily checked by examining all segments of that length. Hence, by
Theorem~\ref{t:substitutions} there are only finitely many atypical
words. Since there are 5 different segments of length~3
of~$\mathbf{t}$ that are supposed to appear in the word, no word with
length shorter than 7 satisfies the criterion of
Theorem~\ref{t:substitutions} and $ab^2a^2ba$ is a word of length~7
that does satisfy it. On the other hand, the segment $a^2bab^2aba$, of
length~9, fails to have the segment $ba^2$ as a factor.

The following result completes the above observations by giving the
full identification of atypical words.

\begin{Thm}
  \label{t:atypical}
  Up to taking variants, the atypical words are the factors of the
  words $aabab$, $abaaba$, and $aabbaab$.
\end{Thm}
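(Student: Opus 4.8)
The plan is to combine Theorem~\ref{t:substitutions} with a finite computation, organized so that the finite part is as small as possible. By Theorem~\ref{t:substitutions}, every segment of~$\mathbf{t}$ that contains some occurrence of $aba$ or $bab$ together with all five length-$3$ segments of~$\mathbf{t}$ is typical; since (as noted after that theorem) this criterion is met by every segment of length~$10$, every atypical word is a factor of some segment of~$\mathbf{t}$ of length~$9$. Moreover, the set of atypical words is closed under taking factors and under taking variants, so it suffices to produce, up to variants, the \emph{maximal} atypical words and check they are exactly the factors of $aabab$, $abaaba$, and $aabbaab$ (these three being pairwise incomparable under the factor-and-variant order, which should be recorded explicitly).

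First I would list, up to variants, all segments of~$\mathbf{t}$ of length at most~$9$ that fail the hypothesis of Theorem~\ref{t:substitutions} — i.e.\ those missing at least one length-$3$ segment of~$\mathbf{t}$, or missing both $aba$ and $bab$. These are the only candidates for atypical words. I would then split them according to which length-$3$ segments they omit. A word omitting $aa$ and $bb$ entirely lies in $(ab)^+\cup(ba)^+$, hence contains $ababa$ and is not a segment of~$\mathbf{t}$; so every candidate of length $\ge 5$ contains $aa$ or $bb$, and up to a variant we may assume it contains $aa$. The genuinely short cases (length $\le 6$, where the $5$-segments-of-length-$3$ criterion cannot even be stated) I would simply enumerate. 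The heart of the argument is then, for each surviving candidate~$w$, to exhibit an explicit endomorphism $\varphi$ not of the form $\mu^n$ or $\xi\circ\mu^n$ with $\varphi(w)$ a segment of~$\mathbf{t}$. For the three claimed maximal words this is concrete: for $aabab$ one checks $a\mapsto ab,\ b\mapsto a$ (or a similarly small substitution) lands $\varphi(aabab)$ inside, say, $\mu^4(a)=abbabaabbaababba$; for $abaaba$ one uses that $abaaba=(aba)^2$ and Proposition~\ref{p:Brlek}-type substitutions $a\mapsto a,b\mapsto ba$ or $a\mapsto\mu(a),b\mapsto\mu(a)$ applied to $(ab)^2$; for $aabbaab$ one exhibits a suitable short non-$\mu^n$ endomorphism by direct segment lookup. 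Each such check is linear via Proposition~\ref{p:segments}.

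The converse direction — that \emph{no} word of length $7,8,9$ outside the factors of $aabab$, $abaaba$, $aabbaab$ is atypical — is where the bulk of the verification sits, and it is the step I expect to be the main obstacle, since a priori Theorem~\ref{t:substitutions} leaves a genuine gap for candidates of length $7$–$9$ that still fail its hypothesis. For these I would argue as in the proof of Theorem~\ref{t:substitutions}: if $\varphi(w)$ is a segment of~$\mathbf{t}$ then, by Proposition~\ref{p:Brlek} applied to $\varphi(a^2)$ and $\varphi(b^2)$, each of $\varphi(a),\varphi(b)$ is obtained from one of $a,b,aba,bab$ by a power of~$\mu$; normalizing by $\mu^k$ and $\xi$ as in that proof reduces to finitely many pairs $(\psi(a),\psi(b))$ with entries among $a,aba,b,bab,ba^2b,\dots$ of bounded length, and for each such $\psi$ and each candidate $w$ one checks directly whether $\psi(w)$ is a segment — if it is and $\psi\notin\{\mu^n,\xi\circ\mu^n\}$, then $w$ is atypical and must already be a factor of one of the three listed words; if for all admissible non-trivial $\psi$ the word $\psi(w)$ fails to be a segment of~$\mathbf{t}$, then $w$ is typical. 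The bound on the length of the entries of~$\psi$ needed comes from the fact that $\psi(w)$ must be a segment of~$\mathbf{t}$ of length at most, say, $9\cdot 6$, so only substitutions with short images can occur; making this bound explicit and then running the finite check is the routine-but-substantial part, and I would relegate its details to a case table, displaying only the three families of maximal atypical words and one witnessing endomorphism for each.
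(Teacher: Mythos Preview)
Your overall strategy matches the paper's, but there are two concrete gaps in the converse direction. First, your reduction via Proposition~\ref{p:Brlek} ``applied to $\varphi(a^2)$ and $\varphi(b^2)$'' presupposes that both $a^2$ and $b^2$ occur in the candidate word~$w$. This fails for the segment $ababba$, which you must show is typical: it contains $b^2$ but not $a^2$, so Proposition~\ref{p:Brlek} gives you control over $\varphi(b)$ but says nothing about $\varphi(a)$. The paper gets around this by observing that $(ab)^2$ is also a factor of $ababba$, so Proposition~\ref{p:Brlek} can instead be applied to the pair $\varphi(b)^2$ and $\varphi(ab)^2$; a short but genuinely different case analysis then finishes. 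Your plan as written would stall here.

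Second, your length bound is not valid. After normalizing by $\mu^k$ and~$\xi$ as in the proof of Theorem~\ref{t:substitutions}, only \emph{one} of $\psi(a),\psi(b)$ lies in $\{a,b,aba,bab\}$; the other is $\mu^{\ell}(v)$ for some $v\in\{a,b,aba,bab\}$ and an a priori unbounded~$\ell$. There is no reason $\psi(w)$ should have length at most $9\cdot 6$: segments of~$\mathbf{t}$ are arbitrarily long, and nothing forces $\psi$ to be short. The paper does not bound $\ell$ by length; it rules out $\ell\ge 1$ or $\ell\ge 2$ case by case using the specific length-$3$ factors of~$w$ (for instance, if $\psi(a)=a$ and $\ell\ge 2$ then $\psi(b)$ begins with $ba^2b$, so $\psi(a^2b)$ contains $a^2ba^2$, a contradiction). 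You need arguments of this kind, not a length cutoff. As a minor point, your proposed witnesses are also wrong: with $a\mapsto ab$, $b\mapsto a$ one gets $\varphi(aabab)=ababaaba$, which contains the overlap $ababa$; and $a\mapsto a$, $b\mapsto ba$ sends $abaaba$ to a word containing $a^3$. The paper's witnesses are $\varphi(b)=b^2aba^2b$ for $aabab$, $\varphi(b)=b^2$ for $abaaba$, and $\varphi(b)=bab$ for $aabbaab$, each with $\varphi(a)=a$.
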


\begin{proof}
  To check that all relevant words have been duly considered, the reader
  may wish to refer to the diagram in Figure~\ref{fig:eggbox} later in
  the paper, where all atypical words are represented.
  
  The following is the complete list of segments of~$\mathbf{t}$ of
  length~5:
  \begin{align*}
    &aabab, aabba, abaab, ababb, abbaa, abbab, \\
    &baaba, baabb, babaa, babba, bbaab, bbaba.
  \end{align*}
  Note that all these words are variants of factors of at least one of
  the three words in the statement of the theorem. Hence, by showing that
  those three words are atypical, we obtain that so are all words of
  length up to~5.
  
  We next indicate for each of the words in the statement of the
  theorem an endomorphism $\varphi$ of~$\{a,b\}^+$ not of the forms
  $\mu^n$ and $\xi\circ\mu^n$ that maps it to a segment
  of~$\mathbf{t}$:
  \begin{itemize}
  \item $aabab$: $\varphi(a)=a$, $\varphi(b)=b^2aba^2b$;
  \item $abaaba$: $\varphi(a)=a$, $\varphi(b)=b^2$;
  \item $aabbaab$: $\varphi(a)=a$, $\varphi(b)=bab$.
  \end{itemize}
  The verification of all these statements amounts to routine
  calculations.

  Showing that there are no other atypical words requires
  more work. Note that a word is typical if it has a typical factor.
  Hence, also excluding variants and words that satisfy the criterion
  of Theorem~\ref{t:substitutions}, we obtain the following reduced
  list of words remaining to be treated:
  \begin{equation}
    \label{eq:small-typical}
    aababb, aabbab, abbaabba, ababba.
  \end{equation}

  We proceed to show that each word $w$ in the
  list~\eqref{eq:small-typical} is typical. For that purpose, assume
  that $\varphi$ is an endomorphism of~$\{a,b\}^+$ such that
  $\varphi(w)$ is a segment of~$\mathbf{t}$.

  In the first three cases, since $\varphi(a^2)$ and $\varphi(b^2)$
  are factors of~$\varphi(w)$, we may start the argument using
  Proposition~\ref{p:Brlek} as in the proof of
  Theorem~\ref{t:substitutions}, assuming that $\varphi(a)$ is either
  $a$ or $aba$.

  Consider first the case $\varphi(a)=a$. Since in the three cases,
  $aab$ is a factor of~$w$ but $\mathbf{t}$ is cube-free, $\varphi(b)$
  must start with $b$. Therefore, we may assume that $\varphi(b)=bv$ with
  $v\in\{a,b\}^+$. In all three cases, since $abb$ is a
  factor of~$w$, we get that $abvbv$ is a factor of~$\varphi(w)$ and
  this would provide an overlap in~$\mathbf{t}$ if $v$~ends with~$a$.
  Hence $v$ ends with $b$. Since $\varphi(b)$ is of the form
  $\mu^n(x)$ for some $x\in\{a,aba,b,bab\}$, we conclude that either
  $\varphi(b)$ starts with $baab$ or it is $bab$. The first case is
  excluded since $aabaa$ is then a factor of~$\varphi(aab)$, whence
  also of~$\varphi(w)$, while it is not a segment of~$\mathbf{t}$. The
  case $\varphi(b)=bab$ is also excluded if $w$ is either $aababb$ or
  $aabbab$ since it leads to the overlap $babab$ in the factor
  $\varphi(bab)$ of~$\varphi(w)$. In case $w=abbaabba$, one can simply
  check directly that $\varphi(w)$ is not a segment of~$\mathbf{t}$.

  Still treating for the moment only the first three of the words in
  the list~\eqref{eq:small-typical}, suppose next that
  $\varphi(a)=aba$. Again, as $aab$ is a factor of~$w$ and $aabaa$
  cannot be a factor of~$\varphi(w)$, $\varphi(b)$ must start with
  $b$. If it ends with $a$, then $a\varphi(bb)$ would be an overlap
  in~$\varphi(w)$ since $abb$ is a factor of~$w$. Hence, $\varphi(b)$
  starts and ends with $b$. This is impossible in case $w$ has the
  factor $bab$ since it would lead to the overlap $babab$
  in~$\varphi(w)$. This excludes the cases where $w$ is the first or
  the second word in the list~\eqref{eq:small-typical}. So, we have
  $w=abbaabba$. Then $\varphi(w)$ is a square segment of~$\mathbf{t}$.
  By Proposition~\ref{p:Brlek}, $\varphi(abba)$ is one of the words
  $\mu^n(x)$ with $x\in\{a,aba,b,bab\}$. Since $n\le1$ gives a word
  that is too short to be $\varphi(abba)$, we must have $n\ge2$, in
  which case a simple calculation shows that $\mu^n(x)$ cannot start
  with $aba$. This ends the verification that the first three words in
  the list~\eqref{eq:small-typical} are typical.

  It remains to consider the word $w=ababba$. Here, we have two square
  factors of $\varphi(w)$, namely the squares of $\varphi(b)$ and
  $\varphi(ab)$. By Proposition~\ref{p:Brlek} we know that there are
  words $x,y\in\{a,aba,b,bab\}$ and non negative integers $m,n$ such
  that $\varphi(b)=\mu^m(x)$ and $\varphi(ab)=\mu^n(y)$. In case
  $m>n$, comparing the lengths of the word $\varphi(ab)$ and its
  factor $\varphi(b)$, we obtain the inequality $2^n|y|>2^m|x|$, so
  that $3\ge|y|>2^{m-n}|x|\ge2^{m-n}$. It follows that $|x|=1$,
  $|y|=3$ and $m=n+1$. From the equalities
  $\mu^n(y)=\varphi(ab)=\varphi(a)\mu^{n+1}(x)$ we then deduce that
  $\varphi(a)=\mu^n(y_1)$, where $y_1$ is the first letter of~$y$, and
  $\mu(x)=xy_1$. Since $abb$ is a factor of~$w$,
  $\varphi(abb)=\mu^n(y_1xy_1xy_1)$ is a segment of~$\mathbf{t}$,
  which contradicts $\mathbf{t}$ being overlap free. Thus, we must
  have $n\ge m$. Then $\varphi(a)$ must be of the form $\mu^m(z)$
  where $z$ is a prefix of~$\mu^{n-m}(y)$. It follows that, as in the
  proof of Theorem~\ref{t:substitutions}, we may then assume that
  $m=0$ and that $\varphi(b)$ is either $b$ or~$bab$. Consider first
  the case where $\varphi(b)=b$. Since $abba$ is a factor of~$w$ but
  $b^3$~is not a factor of $\varphi(w)$, the word $\varphi(a)$ must
  start and end with the letter~$a$ and we may assume that it is not
  reduced to~$a$. Since $\varphi(a)=z$, we conclude that $\varphi(a)$
  must start with $abba$. Since $bba$ is a factor of~$w$, this yields
  the factor $bbabb$ of~$\varphi(w)$, which is not possible since
  $\varphi(w)$ is a segment of~$\mathbf{t}$. Finally, the case
  $\varphi(b)=bab$ is excluded since $\varphi(ab)=\mu^n(y)$ cannot end
  with~$bab$. This concludes the proof of the theorem.
\end{proof}

To facilitate the visualization of the set of atypical words, we give
a semigroup theoretical formulation. Although we do not go deep into
it, the reader unfamiliar with semigroup theory may prefer to skip
these considerations or refer to a standard textbook in the area such
as~\citet{Clifford&Preston:1961,Howie:1995}.

Let $S$ be the set of atypical words. We may define a multiplication
on the set $S^0=S\cup\{0\}$ as follows: for $u,v\in S$, $u\cdot v$ is
$uv$ if $uv$ is atypical and 0 otherwise; for all $s\in S^0$,
$s\cdot0=0\cdot s=0$. Note that $S^0$ is the Rees quotient of
$\{a,b\}^+$ by the ideal consisting of the typical words together
with the words that are not segments of~$\mathbf{t}$.

The diagram in Figure~\ref{fig:eggbox} represents $S^0$ as a partially
ordered set for the Green $\mathcal{J}$-order, in which an element $u$
lies above $v$ if and only if $u$ is a factor of~$v$. The words in
bold are the lexicographic minima among their variants; note that
those that are atoms (which are underlined) are precisely the words that
were shown directly to be atypical in Theorem~\ref{t:atypical}.

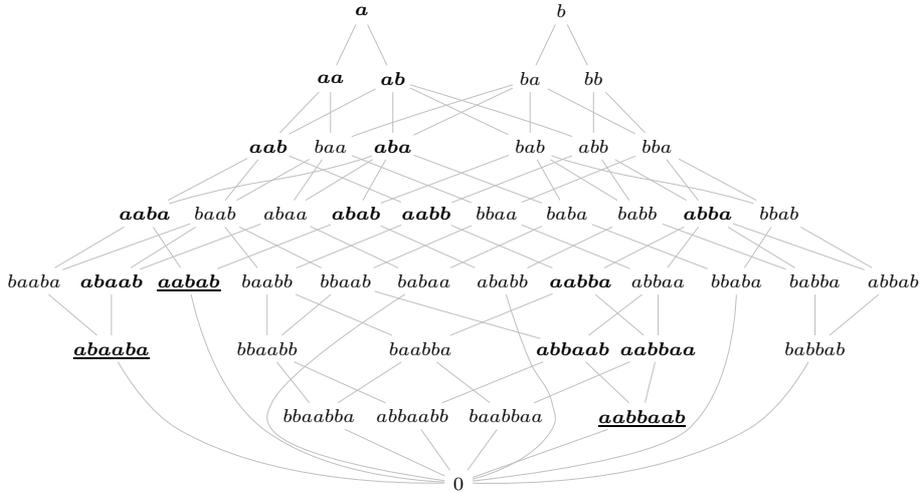
\begin{figure}[ht]
  \centering
  \tiny
  
\enlargethispage{5cm}
\begin{tikzpicture}[scale=.5,>=latex',line join=bevel]
  \pgfsetlinewidth{0.05mm}
\pgfsetcolor{gray}
  \draw [] (303.42bp,412.78bp) .. controls (298.59bp,402.59bp) and (291.38bp,387.35bp)  .. (286.56bp,377.17bp);
  \draw [] (313.58bp,412.78bp) .. controls (318.41bp,402.59bp) and (325.62bp,387.35bp)  .. (330.44bp,377.17bp);
  \draw [] (476.42bp,412.78bp) .. controls (471.59bp,402.59bp) and (464.38bp,387.35bp)  .. (459.56bp,377.17bp);
  \draw [] (486.77bp,412.78bp) .. controls (491.77bp,402.59bp) and (499.26bp,387.35bp)  .. (504.26bp,377.17bp);
  \draw [] (271.34bp,353.78bp) .. controls (261.69bp,343.59bp) and (247.25bp,328.35bp)  .. (237.61bp,318.17bp);
  \draw [] (281.5bp,353.78bp) .. controls (281.5bp,343.59bp) and (281.5bp,328.35bp)  .. (281.5bp,318.17bp);
  \draw [] (320.48bp,356.57bp) .. controls (300.89bp,346.24bp) and (266.65bp,328.16bp)  .. (245.63bp,317.07bp);
  \draw [] (335.5bp,353.78bp) .. controls (335.5bp,343.59bp) and (335.5bp,328.35bp)  .. (335.5bp,318.17bp);
  \draw [] (350.63bp,360.07bp) .. controls (377.4bp,352.14bp) and (434.78bp,334.81bp)  .. (482.5bp,318.0bp) .. controls (485.25bp,317.03bp) and (488.14bp,315.96bp)  .. (490.97bp,314.89bp);
  \draw [] (350.57bp,357.28bp) .. controls (372.21bp,346.92bp) and (412.07bp,327.83bp)  .. (435.63bp,316.54bp);
  \draw [] (439.43bp,357.28bp) .. controls (417.54bp,346.8bp) and (377.01bp,327.38bp)  .. (353.56bp,316.15bp);
  \draw [] (439.47bp,360.05bp) .. controls (412.89bp,352.11bp) and (355.91bp,334.75bp)  .. (308.5bp,318.0bp) .. controls (305.65bp,316.99bp) and (302.66bp,315.88bp)  .. (299.74bp,314.77bp);
  \draw [] (454.5bp,353.78bp) .. controls (454.5bp,343.59bp) and (454.5bp,328.35bp)  .. (454.5bp,318.17bp);
  \draw [] (469.8bp,356.57bp) .. controls (489.63bp,346.3bp) and (524.22bp,328.37bp)  .. (545.65bp,317.27bp);
  \draw [] (509.5bp,353.78bp) .. controls (509.5bp,343.59bp) and (509.5bp,328.35bp)  .. (509.5bp,318.17bp);
  \draw [] (519.85bp,353.78bp) .. controls (529.68bp,343.59bp) and (544.38bp,328.35bp)  .. (554.2bp,318.17bp);
  \draw [] (209.35bp,295.83bp) .. controls (190.27bp,285.67bp) and (160.33bp,269.72bp)  .. (140.56bp,259.19bp);
  \draw [] (245.69bp,297.93bp) .. controls (270.73bp,287.51bp) and (315.76bp,268.78bp)  .. (342.61bp,257.61bp);
  \draw [] (218.85bp,294.78bp) .. controls (210.62bp,284.59bp) and (198.32bp,269.35bp)  .. (190.11bp,259.17bp);
  \draw [] (317.33bp,298.02bp) .. controls (314.4bp,296.93bp) and (311.39bp,295.88bp)  .. (308.5bp,295.0bp) .. controls (239.64bp,273.89bp) and (219.71bp,278.93bp)  .. (150.5bp,259.0bp) .. controls (147.64bp,258.18bp) and (144.65bp,257.23bp)  .. (141.71bp,256.26bp);
  \draw [] (318.01bp,294.78bp) .. controls (301.38bp,284.59bp) and (276.52bp,269.35bp)  .. (259.91bp,259.17bp);
  \draw [] (329.48bp,294.78bp) .. controls (323.76bp,284.59bp) and (315.2bp,269.35bp)  .. (309.49bp,259.17bp);
  \draw [] (353.71bp,298.62bp) .. controls (381.64bp,288.08bp) and (434.93bp,267.97bp)  .. (464.81bp,256.69bp);
  \draw [] (490.84bp,298.16bp) .. controls (464.07bp,287.64bp) and (414.86bp,268.3bp)  .. (386.4bp,257.11bp);
  \draw [] (528.12bp,294.78bp) .. controls (545.82bp,284.59bp) and (572.29bp,269.35bp)  .. (589.96bp,259.17bp);
  \draw [] (516.65bp,294.78bp) .. controls (523.44bp,284.59bp) and (533.6bp,269.35bp)  .. (540.38bp,259.17bp);
  \draw [] (274.16bp,294.78bp) .. controls (267.19bp,284.59bp) and (256.76bp,269.35bp)  .. (249.8bp,259.17bp);
  \draw [] (263.16bp,295.05bp) .. controls (245.23bp,284.82bp) and (218.07bp,269.35bp)  .. (200.06bp,259.08bp);
  \draw [] (299.74bp,298.28bp) .. controls (326.23bp,287.8bp) and (375.27bp,268.38bp)  .. (403.65bp,257.15bp);
  \draw [] (435.98bp,298.51bp) .. controls (407.95bp,287.93bp) and (354.85bp,267.88bp)  .. (325.09bp,256.65bp);
  \draw [] (460.52bp,294.78bp) .. controls (466.24bp,284.59bp) and (474.8bp,269.35bp)  .. (480.51bp,259.17bp);
  \draw [] (471.99bp,294.78bp) .. controls (488.62bp,284.59bp) and (513.48bp,269.35bp)  .. (530.09bp,259.17bp);
  \draw [] (473.06bp,298.15bp) .. controls (476.19bp,297.02bp) and (479.42bp,295.93bp)  .. (482.5bp,295.0bp) .. controls (551.06bp,274.39bp) and (570.62bp,278.51bp)  .. (639.5bp,259.0bp) .. controls (642.34bp,258.19bp) and (645.31bp,257.28bp)  .. (648.23bp,256.34bp);
  \draw [] (572.78bp,294.78bp) .. controls (580.64bp,284.59bp) and (592.41bp,269.35bp)  .. (600.26bp,259.17bp);
  \draw [] (545.75bp,297.81bp) .. controls (520.11bp,287.3bp) and (474.21bp,268.47bp)  .. (447.14bp,257.37bp);
  \draw [] (583.45bp,295.31bp) .. controls (602.39bp,285.13bp) and (631.38bp,269.54bp)  .. (650.63bp,259.18bp);
  \draw [] (127.65bp,235.78bp) .. controls (134.44bp,225.59bp) and (144.6bp,210.35bp)  .. (151.38bp,200.17bp);
  \draw [] (102.44bp,235.78bp) .. controls (85.28bp,225.59bp) and (59.612bp,210.35bp)  .. (42.476bp,200.17bp);
  \draw [] (386.03bp,237.34bp) .. controls (410.0bp,227.15bp) and (448.67bp,210.7bp)  .. (473.84bp,199.99bp);
  \draw [] (342.94bp,237.59bp) .. controls (318.22bp,227.39bp) and (277.78bp,210.68bp)  .. (251.65bp,199.89bp);
  \draw [] (221.19bp,238.45bp) .. controls (193.55bp,228.02bp) and (145.43bp,209.86bp)  .. (116.22bp,198.83bp);
  \draw [] (263.68bp,236.44bp) .. controls (285.21bp,226.21bp) and (318.43bp,210.43bp)  .. (340.32bp,200.04bp);
  \draw [] (281.79bp,237.97bp) .. controls (255.4bp,227.59bp) and (210.98bp,210.13bp)  .. (183.28bp,199.24bp);
  \draw [] (325.04bp,236.83bp) .. controls (347.69bp,226.67bp) and (383.22bp,210.72bp)  .. (406.69bp,200.19bp);
  \draw [] (588.07bp,235.91bp) .. controls (568.31bp,225.67bp) and (538.54bp,210.25bp)  .. (518.82bp,200.03bp);
  \draw [] (600.41bp,235.78bp) .. controls (592.72bp,225.59bp) and (581.23bp,210.35bp)  .. (573.55bp,200.17bp);
  \draw [] (630.21bp,238.81bp) .. controls (659.91bp,228.3bp) and (713.08bp,209.47bp)  .. (744.43bp,198.37bp);
  \draw [] (625.99bp,235.78bp) .. controls (642.62bp,225.59bp) and (667.48bp,210.35bp)  .. (684.09bp,200.17bp);
  \draw [] (164.57bp,235.78bp) .. controls (148.48bp,225.59bp) and (124.42bp,210.35bp)  .. (108.35bp,200.17bp);
  \draw [] (160.0bp,238.69bp) .. controls (131.03bp,228.17bp) and (79.576bp,209.5bp)  .. (49.095bp,198.43bp);
  \draw [] (189.96bp,235.78bp) .. controls (198.01bp,225.59bp) and (210.04bp,210.35bp)  .. (218.07bp,200.17bp);
  \draw [] (202.49bp,235.91bp) .. controls (222.79bp,225.67bp) and (253.36bp,210.25bp)  .. (273.62bp,200.03bp);
  \draw [] (464.9bp,236.57bp) .. controls (442.7bp,226.36bp) and (408.25bp,210.53bp)  .. (385.53bp,200.09bp);
  \draw [] (508.19bp,238.09bp) .. controls (534.87bp,227.75bp) and (580.06bp,210.22bp)  .. (608.27bp,199.28bp);
  \draw [] (525.77bp,235.91bp) .. controls (504.75bp,225.67bp) and (473.09bp,210.25bp)  .. (452.12bp,200.03bp);
  \draw [] (569.24bp,238.45bp) .. controls (597.43bp,228.02bp) and (646.5bp,209.86bp)  .. (676.29bp,198.83bp);
  \draw [] (447.07bp,237.72bp) .. controls (472.38bp,227.41bp) and (514.26bp,210.36bp)  .. (540.84bp,199.54bp);
  \draw [] (403.87bp,237.09bp) .. controls (380.47bp,226.91bp) and (343.24bp,210.71bp)  .. (318.84bp,200.09bp);
  \draw [] (689.12bp,235.78bp) .. controls (706.82bp,225.59bp) and (733.29bp,210.35bp)  .. (750.96bp,200.17bp);
  \draw [] (663.54bp,235.78bp) .. controls (656.93bp,225.59bp) and (647.03bp,210.35bp)  .. (640.43bp,200.17bp);
  \draw [] (160.6bp,176.85bp) .. controls (166.09bp,151.93bp) and (183.36bp,89.641bp)  .. (223.5bp,59.0bp) .. controls (272.6bp,21.52bp) and (350.1bp,14.242bp)  .. (380.08bp,12.835bp);
  \draw [] (511.1bp,176.78bp) .. controls (523.08bp,166.59bp) and (540.99bp,151.35bp)  .. (552.95bp,141.17bp);
  \draw [] (473.97bp,177.44bp) .. controls (449.03bp,167.21bp) and (410.55bp,151.43bp)  .. (385.2bp,141.04bp);
  \draw [] (91.5bp,176.78bp) .. controls (91.5bp,166.59bp) and (91.5bp,151.35bp)  .. (91.5bp,141.17bp);
  \draw [] (433.59bp,176.95bp) .. controls (437.75bp,163.16bp) and (445.67bp,138.38bp)  .. (454.5bp,118.0bp) .. controls (461.7bp,101.4bp) and (469.15bp,99.561bp)  .. (473.5bp,82.0bp) .. controls (475.96bp,72.078bp) and (478.62bp,67.846bp)  .. (473.5bp,59.0bp) .. controls (458.55bp,33.18bp) and (423.25bp,20.454bp)  .. (404.57bp,15.338bp);
  \draw [] (565.5bp,176.78bp) .. controls (565.5bp,166.59bp) and (565.5bp,151.35bp)  .. (565.5bp,141.17bp);
  \draw [] (551.58bp,176.78bp) .. controls (538.35bp,166.59bp) and (518.57bp,151.35bp)  .. (505.36bp,141.17bp);
  \draw [] (756.71bp,176.78bp) .. controls (744.55bp,166.59bp) and (726.37bp,151.35bp)  .. (714.23bp,141.17bp);
  \draw [] (37.102bp,176.78bp) .. controls (49.081bp,166.59bp) and (66.994bp,151.35bp)  .. (78.954bp,141.17bp);
  \draw [] (226.5bp,176.78bp) .. controls (226.5bp,166.59bp) and (226.5bp,151.35bp)  .. (226.5bp,141.17bp);
  \draw [] (251.2bp,176.91bp) .. controls (275.09bp,166.67bp) and (311.08bp,151.25bp)  .. (334.93bp,141.03bp);
  \draw [] (345.95bp,176.94bp) .. controls (313.36bp,155.74bp) and (242.93bp,107.84bp)  .. (230.5bp,82.0bp) .. controls (226.07bp,72.788bp) and (224.35bp,67.165bp)  .. (230.5bp,59.0bp) .. controls (248.99bp,34.46bp) and (346.16bp,18.891bp)  .. (380.48bp,14.088bp);
  \draw [] (701.5bp,176.78bp) .. controls (701.5bp,166.59bp) and (701.5bp,151.35bp)  .. (701.5bp,141.17bp);
  \draw [] (281.71bp,176.78bp) .. controls (269.55bp,166.59bp) and (251.37bp,151.35bp)  .. (239.23bp,141.17bp);
  \draw [] (319.73bp,179.64bp) .. controls (322.69bp,178.74bp) and (325.66bp,177.84bp)  .. (328.5bp,177.0bp) .. controls (375.75bp,162.97bp) and (430.95bp,147.39bp)  .. (463.47bp,138.3bp);
  \draw [] (633.33bp,176.67bp) .. controls (632.33bp,151.92bp) and (626.27bp,90.887bp)  .. (591.5bp,59.0bp) .. controls (563.52bp,33.34bp) and (443.26bp,18.086bp)  .. (404.55bp,13.772bp);
  \draw [] (97.206bp,117.82bp) .. controls (106.14bp,102.2bp) and (124.99bp,73.434bp)  .. (149.5bp,59.0bp) .. controls (227.97bp,12.792bp) and (343.12bp,11.227bp)  .. (380.39bp,12.054bp);
  \draw [] (695.8bp,117.8bp) .. controls (686.88bp,102.16bp) and (668.05bp,73.362bp)  .. (643.5bp,59.0bp) .. controls (562.21bp,11.451bp) and (442.93bp,10.801bp)  .. (404.74bp,11.953bp);
  \draw [] (562.87bp,117.78bp) .. controls (560.36bp,107.59bp) and (556.62bp,92.348bp)  .. (554.12bp,82.173bp);
  \draw [] (540.98bp,117.91bp) .. controls (517.27bp,107.67bp) and (481.55bp,92.25bp)  .. (457.89bp,82.031bp);
  \draw [] (249.9bp,117.91bp) .. controls (272.54bp,107.67bp) and (306.63bp,92.25bp)  .. (329.22bp,82.031bp);
  \draw [] (234.96bp,117.78bp) .. controls (243.01bp,107.59bp) and (255.04bp,92.348bp)  .. (263.07bp,82.173bp);
  \draw [] (502.79bp,117.78bp) .. controls (513.51bp,107.59bp) and (529.55bp,92.348bp)  .. (540.27bp,82.173bp);
  \draw [] (465.68bp,117.91bp) .. controls (440.71bp,107.67bp) and (403.1bp,92.25bp)  .. (378.18bp,82.031bp);
  \draw [] (373.42bp,117.78bp) .. controls (386.65bp,107.59bp) and (406.43bp,92.348bp)  .. (419.64bp,82.173bp);
  \draw [] (342.95bp,117.78bp) .. controls (327.21bp,107.59bp) and (303.69bp,92.348bp)  .. (287.98bp,82.173bp);
  \draw [] (521.97bp,58.912bp) .. controls (486.76bp,46.292bp) and (429.57bp,25.79bp)  .. (404.7bp,16.874bp);
  \draw [] (360.02bp,58.779bp) .. controls (367.18bp,48.588bp) and (377.87bp,33.348bp)  .. (385.01bp,23.173bp);
  \draw [] (425.79bp,58.779bp) .. controls (418.46bp,48.588bp) and (407.5bp,33.348bp)  .. (400.18bp,23.173bp);
  \draw [] (293.98bp,58.912bp) .. controls (319.54bp,46.871bp) and (360.33bp,27.657bp)  .. (380.46bp,18.173bp);
  \draw (308.5bp,424.5bp) node {$\bm{a}$};
  \draw (281.5bp,365.5bp) node {$\bm{aa}$};
  \draw (335.5bp,365.5bp) node {$\bm{ab}$};
  \draw (481.5bp,424.5bp) node {$b$};
  \draw (454.5bp,365.5bp) node {$ba$};
  \draw (509.5bp,365.5bp) node {$bb$};
  \draw (227.5bp,306.5bp) node {$\bm{aab}$};
  \draw (281.5bp,306.5bp) node {$baa$};
  \draw (335.5bp,306.5bp) node {$\bm{aba}$};
  \draw (509.5bp,306.5bp) node {$abb$};
  \draw (454.5bp,306.5bp) node {$bab$};
  \draw (564.5bp,306.5bp) node {$bba$};
  \draw (120.5bp,247.5bp) node {$\bm{aaba}$};
  \draw (364.5bp,247.5bp) node {$\bm{aabb}$};
  \draw (181.5bp,247.5bp) node {$baab$};
  \draw (242.5bp,247.5bp) node {$abaa$};
  \draw (303.5bp,247.5bp) node {$\bm{abab}$};
  \draw (486.5bp,247.5bp) node {$baba$};
  \draw (608.5bp,247.5bp) node {$\bm{abba}$};
  \draw (547.5bp,247.5bp) node {$babb$};
  \draw (425.5bp,247.5bp) node {$bbaa$};
  \draw (670.5bp,247.5bp) node {$bbab$};
  \draw (158.5bp,186.5bp) node {$\underline{\bm{aabab}}$};
  \draw (24.5bp,188.5bp) node {$baaba$};
  \draw (498.5bp,188.5bp) node {$\bm{aabba}$};
  \draw (226.5bp,188.5bp) node {$baabb$};
  \draw (91.5bp,188.5bp) node {$\bm{abaab}$};
  \draw (362.5bp,188.5bp) node {$babaa$};
  \draw (430.5bp,188.5bp) node {$ababb$};
  \draw (565.5bp,188.5bp) node {$abbaa$};
  \draw (769.5bp,188.5bp) node {$abbab$};
  \draw (701.5bp,188.5bp) node {$babba$};
  \draw (294.5bp,188.5bp) node {$bbaab$};
  \draw (633.5bp,188.5bp) node {$bbaba$};
  \draw (392.5bp,11.5bp) node {$0$};
  \draw (565.5bp,129.5bp) node {$\bm{aabbaa}$};
  \draw (359.5bp,129.5bp) node {$baabba$};
  \draw (91.5bp,127.5bp) node {$\underline{\bm{abaaba}}$};
  \draw (491.5bp,129.5bp) node {$\bm{abbaab}$};
  \draw (701.5bp,129.5bp) node {$babbab$};
  \draw (226.5bp,129.5bp) node {$bbaabb$};
  \draw (551.5bp,68.5bp) node {$\underline{\bm{aabbaab}}$};
  \draw (433.5bp,70.5bp) node {$baabbaa$};
  \draw (352.5bp,70.5bp) node {$abbaabb$};
  \draw (271.5bp,70.5bp) node {$bbaabba$};
\end{tikzpicture}

  \caption{The semigroup $S^0$}
  \label{fig:eggbox}
\end{figure}

We conclude this section with another application of
Theorem~\ref{t:substitutions}, this one concerning infinite patterns
of~$\mathbf{t}$.

\begin{Cor}
  \label{c:substitutions}
  Let $w$ be an infinite word and suppose that there is an endomorphism
  $\varphi$ of~$\{a,b\}^+$ such that $\varphi(w)$ is a suffix of
  either $\mathbf{t}$ or $\xi(\mathbf{t})$. Then $w$ is itself a
  suffix of either $\mathbf{t}$ or $\xi(\mathbf{t})$.
\end{Cor}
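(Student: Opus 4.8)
The plan is to reduce the infinite statement to the finite one (Theorem~\ref{t:substitutions}) by a compactness-style argument and a suitable normalization of $\varphi$. First I would arrange, exactly as in the proof of Theorem~\ref{t:substitutions}, that $\varphi$ is in normal form: using Proposition~\ref{p:Brlek} applied to the square segments $\varphi(a^2)$ and $\varphi(b^2)$ (which are factors of $\varphi(w)$ since $w$ is infinite and hence contains $a^2$ or $b^2$ — actually one must be careful here, see below), we may write $\varphi(a)=\mu^k(u)$ and $\varphi(b)=\mu^\ell(v)$ with $u,v\in\{a,b,aba,bab\}$; after composing with a power of $\xi$ and peeling off a common power of $\mu$ (using that $\mathbf t$ is a fixed point of the injective map $\mu$, so that $\varphi(w)$ a suffix of $\mathbf t$ forces $\psi(w)$ a suffix of $\mathbf t$ where $\varphi=\mu^k\circ\psi$), we reduce to the case where $\varphi$ itself is not of the form $\mu^n$ or $\xi\circ\mu^n$, and we want to derive a contradiction — i.e.\ we want to show this reduced $\varphi$ cannot exist unless it is actually the identity (or $\xi$), in which case $w$ is literally a suffix of $\mathbf t$ or $\xi(\mathbf t)$.

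Next I would exploit the hypothesis of Theorem~\ref{t:substitutions}: since $\varphi(w)$ is a suffix of $\mathbf t$ or $\xi(\mathbf t)$, it is in particular an infinite segment, hence it contains \emph{every} finite segment of $\mathbf t$ as a factor (uniform recurrence of $\mathbf t$, discussed after Proposition~\ref{p:segments}). Now take any sufficiently long prefix $w_0$ of $w$ — long enough that $w_0$ contains, among its factors, at least one of $aba$, $bab$ together with all five segments of $\mathbf t$ of length $3$; since $w$ itself, being infinite and (being a pattern of $\mathbf t$) necessarily overlap-free up to $\varphi$, must contain arbitrarily long segments of $\mathbf t$, such a finite prefix exists once we know $w$ is not eventually trivial. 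Then $\varphi(w_0)$ is a factor of $\varphi(w)$, hence a segment of $\mathbf t$, so $w_0$ is a segment of $\mathbf t$ satisfying the criterion of Theorem~\ref{t:substitutions}; therefore $w_0$ is typical, which forces the restriction of $\varphi$ to be $\mu^n$ or $\xi\circ\mu^n$ on the image of $w_0$, and after our normalization this means $\varphi$ acts as the identity or as $\xi$ on the letters actually occurring in $w_0$, hence on all of $w$ provided both letters already occur in $w_0$ (which they do, since $w_0$ contains all length-$3$ segments). Thus $\varphi$ is the identity or $\xi$, and $w=\varphi^{-1}(\varphi(w))$ is a suffix of $\mathbf t$ or $\xi(\mathbf t)$ accordingly.

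The main obstacle I expect is handling the degenerate cases where $w$ does \emph{not} automatically satisfy the length-$3$ hypothesis of Theorem~\ref{t:substitutions}: namely $w$ could use only one letter, or fail to contain $aba$ and $bab$, or more subtly $\varphi$ could kill a letter (e.g.\ $\varphi$ not injective on $\{a,b\}$ is not possible here since $\varphi(a),\varphi(b)$ are nonempty, but $\varphi(a)$ could be a power-of-$\mu$ image that conspires). If $w$ is ultimately periodic or uses one letter then $\varphi(w)$ would force a cube or an overlap into $\mathbf t$, which is impossible, so $w$ must contain all short segments of $\mathbf t$ — this is exactly where one uses that $\mathbf t$ is cube-free and overlap-free, and it is the step that needs the most care to state cleanly. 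Once that is pinned down, every long-enough prefix of $w$ meets the hypothesis of Theorem~\ref{t:substitutions} and the rest is the bookkeeping described above. I would also double-check the direction of ``suffix'': since $\mu^n(a)$ is a \emph{prefix} of $\mathbf t$, to talk about suffixes one works with segments in the recurrent sense and the normalization via Corollary~\ref{c:Luca-Varricchio+} is what lets the peeled-off $\mu^k$ be absorbed; this is a routine but necessary compatibility check.
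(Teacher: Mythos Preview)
Your high-level plan is essentially the paper's, but there is a genuine gap in how you get the hypotheses of Theorem~\ref{t:substitutions} to hold for a prefix $w_0$ of $w$. You write ``$\varphi(w_0)$ is a factor of $\varphi(w)$, hence a segment of $\mathbf t$, so $w_0$ is a segment of $\mathbf t$''; but that inference is exactly the content of Theorem~\ref{t:unavoidable-in-TM}, which you never invoke. Knowing that $\varphi(w_0)$ is a segment only tells you $w_0$ is \emph{unavoidable}, not that it is a segment. Likewise, your uniform-recurrence argument is applied to $\varphi(w)$, which only tells you $\varphi(w)$ contains every segment of~$\mathbf t$; it says nothing about which length-$3$ words occur in $w$ itself. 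Your ``main obstacle'' paragraph sees the difficulty but the proposed fix (cube-free/overlap-free considerations ruling out degenerate $w$) is not sufficient: ruling out $w=a^\omega$ or $(ab)^\omega$ does not explain, for example, why $aab$ must occur in~$w$.

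The paper closes this gap in two clean strokes. First, every finite segment of $w$ is unavoidable in~$\mathbf t$ (via $\varphi$) and is extendable on the right, hence cannot be $a^2ba^2$ or $b^2ab^2$; by Theorem~\ref{t:unavoidable-in-TM} it is therefore an honest segment of~$\mathbf t$. Second, since the segment language of~$\mathbf t$ defines a minimal subshift, $w$ and~$\mathbf t$ have the \emph{same} finite segments; in particular $w$ contains a fixed witness (the paper uses $a^2b^2a^2bab$) meeting the hypothesis of Theorem~\ref{t:substitutions}. From that point your argument and the paper's coincide: $\varphi$ must be $\mu^n$ or $\xi\circ\mu^n$, and one peels off $\mu^n$ using that $\mathbf t$ and $\xi(\mathbf t)$ are fixed by the injective $\mu$ (with Corollary~\ref{c:Luca-Varricchio+} ensuring the suffix starts at a compatible position, as you noted). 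Your preliminary normalization of $\varphi$ via Proposition~\ref{p:Brlek} is unnecessary once Theorem~\ref{t:substitutions} is applicable, and your worry about whether $a^2$, $b^2$ occur in $w$ dissolves once minimality gives you the full segment language.
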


\begin{proof}
  Since all segments of $w$ are unavoidable in~$\mathbf{t}$ and they
  are all extendable on the right, by
  Theorem~\ref{t:unavoidable-in-TM} they are segments of~$\mathbf{t}$.
  Since the language of the segments of~$\mathbf{t}$ defines a minimal
  subshift \citep[Proposition~5.2]{Queffelec:2010}, it follows that $w$
  and~$\mathbf{t}$ have the same segments. In particular, the word
  $a^2b^2a^2bab$ is a segment of~$w$ and it satisfies the assumption
  of Theorem~\ref{t:substitutions}. It follows that there is $n\ge0$
  such that $\varphi=\mu^n$ or $\varphi=\xi\circ\mu^n$. Again, since
  $\mu$ is injective and both $\mathbf{t}$ and $\xi(\mathbf{t})$ are
  fixed by~$\mu$, the result follows.
\end{proof}

The somewhat different formulation for finite and infinite segments
(compare Theorem~\ref{t:unavoidable-in-TM} with
Corollary~\ref{c:substitutions}) is fully justified by the following
result, which entails that the infinite words $\mathbf{t}$
and~$\xi(\mathbf{t})$ have no common suffix.

\begin{Prop}
  \label{p:suffixes}
  If $\mathbf{s}$ is an infinite word over $\{a,b\}$ and $\mathbf{w}$
  is a common infinite suffix of $\mathbf{s}$ and $\xi(\mathbf{s})$,
  then $\mathbf{w}$ is periodic.
\end{Prop}

\begin{proof}
  By assumption, there are finite words $x$ and $y$ such that
  $\mathbf{s}=x\mathbf{w}$, $\xi(\mathbf{s})=y\mathbf{w}$. Since
  $\mathbf{w}$ and $\xi(\mathbf{w})$ start with different letters, the
  words $x$ and $y$ have different lengths. Replacing $\mathbf{s}$ by
  $\xi(\mathbf{s})$, if needed, we may assume that $x$ is shorter than
  $y$. As $\xi(x)$ is a prefix of $\xi(\mathbf{s})=y\mathbf{w}$, it
  follows that $y=\xi(x)z$ for some word $z$. From
  $\xi(\mathbf{s})=\xi(x)z\mathbf{w}$, we deduce that
  $\xi(\mathbf{w})=z\mathbf{w}$ and so
  $\mathbf{w}=\xi^2(\mathbf{w})=\xi(z)z\mathbf{w}$, thereby showing that
  $\mathbf{w}$ is periodic.
\end{proof}

\section{Final remarks and problems}
\label{sec:problems}

For an infinite word $\mathbf{w}$ over a finite alphabet $A$, let
$L(\mathbf{w})$ be the language consisting of its finite segments.
Note that the automorphisms of the semigroup $A^+$ permute the letters
of~$A$; we call them \emph{letter exchanges}. The language obtained
from $L(\mathbf{w})$ by applying all possible letter exchanges is
denoted $\bar L(\mathbf{w})$. Let $\Cl E(\mathbf{w})$ denote the set
of all endomorphisms $\varphi$ of~$A^+$ such that
$\varphi(L(\mathbf{w}))\subseteq L(\mathbf{w})$. The set $\bar{\Cl
  E}(\mathbf{w})$ is similarly defined using $\bar L(\mathbf{w})$
instead of~$L(\mathbf{w})$. Note that both $\Cl E(\mathbf{w})$ and
$\bar{\Cl E}(\mathbf{w})$ are submonoids of the monoid $\End(A^+)$ of
all endomorphisms of the semigroup~$A^+$.

The following is an immediate consequence of
Theorem~\ref{t:substitutions}.

\begin{Cor}
  \label{c:Et}
  The monoid $\Cl E(\mathbf{t})=\bar{\Cl E}(\mathbf{t})$ is generated
  by the set $\{\xi,\mu\}$. In particular, it is finitely
  generated.\qed
\end{Cor}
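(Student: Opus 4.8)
The plan is to reduce the whole statement to Theorem~\ref{t:substitutions}, which is precisely why that theorem was phrased in terms of ``typical'' words. First I would dispose of the cheap halves. The inclusion $\langle\xi,\mu\rangle\subseteq\Cl E(\mathbf{t})$ is already recorded in the opening paragraph of Section~\ref{sec:typical}: the set of finite segments of~$\mathbf{t}$ is closed under applying $\mu$ and under applying $\xi$, so every composite of these two endomorphisms sends $L(\mathbf{t})$ into itself. For the identification $\Cl E(\mathbf{t})=\bar{\Cl E}(\mathbf{t})$, note that over the two-letter alphabet the only letter exchanges are the identity and $\xi$, and by the observation in Section~\ref{sec:segments} a finite word belongs to $L(\mathbf{t})$ exactly when its image under~$\xi$ does; hence $\bar L(\mathbf{t})=L(\mathbf{t})$, and the two monoids are defined by the same condition. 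So the real content is the reverse inclusion $\Cl E(\mathbf{t})\subseteq\langle\xi,\mu\rangle$.

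For that, I would fix once and for all a single segment of~$\mathbf{t}$ satisfying the hypothesis of Theorem~\ref{t:substitutions}, for instance $w=a^2b^2a^2bab$, which is exactly the witness already used in the proof of Corollary~\ref{c:substitutions}: it is a segment of~$\mathbf{t}$ and it contains $aba$ and $bab$ together with every other length-$3$ segment of~$\mathbf{t}$. Now take an arbitrary $\varphi\in\Cl E(\mathbf{t})$. By definition $\varphi(w)$ is a segment of~$\mathbf{t}$, so Theorem~\ref{t:substitutions} — which tells us $w$ is typical, i.e.\ that every endomorphism mapping $w$ to a segment of~$\mathbf{t}$ has the form $\mu^n$ or $\xi\circ\mu^n$ with $n\ge0$ — forces $\varphi\in\{\mu^n,\ \xi\circ\mu^n : n\ge0\}\subseteq\langle\xi,\mu\rangle$. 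This yields $\Cl E(\mathbf{t})\subseteq\langle\xi,\mu\rangle$, hence equality, and finite generation is then immediate.

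I expect no genuine obstacle: once Theorem~\ref{t:substitutions} is in hand, this is a one-line application to a fixed word. The only points requiring a moment of care are purely bookkeeping — separating out which inclusions are routine, and confirming that the chosen $w$ really meets the hypothesis of Theorem~\ref{t:substitutions} — and that check has already been carried out in the discussion preceding Corollary~\ref{c:substitutions}.
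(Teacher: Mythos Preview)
Your argument is correct and is exactly the intended one: the paper records Corollary~\ref{c:Et} as an immediate consequence of Theorem~\ref{t:substitutions} (hence the bare \qed), and you have simply spelled out the routine inclusions together with the application of that theorem to a single typical segment such as $a^2b^2a^2bab$.
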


Corollary~\ref{c:Et} is intimately related with a result of Thue
(see~\citet[Chapter~3, Theorem~2.16]{Berstel:1995}) that characterizes
the set of the so-called \emph{overlap-free morphisms}, that is,
endomorphisms of $\{a,b\}^+$ that map the set of all overlap-free
words into itself, namely as the monoid generated by $\{\xi,\mu\}$. In
fact, in view of another result of Thue (see~\citet[Chapter~3,
Theorem~2.15]{Berstel:1995}), all (overlap-free) words that can be
arbitrarily prolonged in both directions to overlap-free words are
segments of~$\mathbf{t}$. It follows that overlap-free morphisms
belong to $\Cl E(\mathbf{t})$ and so Corollary~\ref{c:Et} immediately
yields Thue's necessary condition for overlap-free morphisms. That the
condition is also sufficient is given by another result of Thue
(see~\citet[Chapter~3, Lemma~2.2]{Berstel:1995}). It does not appear to
be immediately obvious how to deduce Corollary~\ref{c:Et} from Thue's
results.

Corollary~\ref{c:Et} is also related with a result of
\citet{Pansiot:1981} characterizing the endomorphisms of
$\{a,b\}$ that generate some infinite word obtained from $\mathbf{t}$
by dropping a finite prefix as precisely the powers of~$\mu$. Since
$\mathbf{t}$ is recurrent, all such infinite words $\mathbf{w}$ have
the same language $L(\mathbf{w})=L(\mathbf{t})$. Hence, the
endomorphisms $\varphi$ considered by Pansiot belong to $\Cl
E(\mathbf{t})$, whence they are products of $\xi$ and $\mu$. Since
$\xi$ and $\mu$ commute, it follows from Corollary~\ref{c:Et} that
$\varphi$ is either $\mu^k$ or $\xi\mu^k$ for some $k\ge0$, the latter
possibility being excluded because $\mathbf{w}$ is assumed to be a
fixed point of~$\varphi$. This gives Pansiot's result. Again, it is
not clear how to deduce Corollary~\ref{c:Et} from Pansiot's results.

Theorems~\ref{t:unavoidable-in-TM} and~\ref{t:substitutions}, together
with Corollary~\ref{c:Et} may be regarded as three finiteness
properties of the Prouhet-Thue-Morse sequence. It is natural to ask
which infinite words possess such finiteness properties. More
precisely, we propose the following problems.

\begin{Problem}
  \label{pb:1}
  Which infinite words $\mathbf{w}$ have the property that, up to
  finitely many exceptions, the patterns of~$\mathbf{w}$ on the same
  alphabet are obtained from its segments up to an exchange of
  letters?
\end{Problem}

\begin{Problem}
  \label{pb:2}
  For which infinite words $\mathbf{w}$ is the monoid $\Cl
  E(\mathbf{w})$ finitely generated? Similar question for $\bar{\Cl
    E}(\mathbf{w})$.
\end{Problem}

We say that a finite segment $u$ of~$\mathbf{w}$ is
\emph{$\mathbf{w}$-atypical} if there is some endomorphism
$\varphi\notin\bar{\Cl E}(\mathbf{w})$ of~$A^+$ such that $\varphi(u)$
is also a segment of~$\mathbf{w}$.

\begin{Problem}
  \label{pb:3}
  Which infinite words $\mathbf{w}$ have only finitely many
  $\mathbf{w}$-atypical segments?
\end{Problem}

A negative example for Problem~\ref{pb:1} is provided by the
\emph{Fibonacci infinite word}, which is the only fixed point
$\mathbf{f}$ of the endomorphism \emph{$\phi$} of~$\{a,b\}^+$ defined
by $\phi(a)=ab$ and $\phi(b)=a$. That there are infinitely many finite
binary patterns of~$\mathbf{f}$ that are not segments of~$\mathbf{f}$
was proved in~\citet{Restivo&Salemi:2002a} (see
also~\citet{Restivo&Salemi:2002b}), where it is also shown that there
are Sturmian infinite words that admit as patterns all segments of all
Sturmian infinite words. Recall that an infinite word is
\emph{Sturmian} if it has exactly $n+1$ segments of each length
$n\ge1$. We do not know whether $\Cl E(\mathbf{f})$ is generated
by~$\varphi$ and $\bar{\Cl E}(\mathbf{f})$ is generated by $\varphi$
and $\xi$. We also do not know whether the set of
$\mathbf{f}$-atypical words is finite.

Problem~\ref{pb:1} was raised in~\citet{Restivo&Salemi:2002a} for
binary infinite words that are either fixed points of endomorphisms or
of linear complexity. In the same paper, it is observed that if
$\mathbf{w}$ is an infinite word with all elements of $A^+$ as
segments (which may be obtained for instance by concatenating all the
words in a sequence enumerating the elements of~$A^+$), then obviously
$\mathbf{w}$ is a positive example for Problem~\ref{pb:1}. Note that
$\Cl E(\mathbf{w})=\bar{\Cl E}(\mathbf{w})=\End(A^+)$ and it is easy
to see that $\End(A^+)$ is not finitely generated: for the
endomorphisms that maps each letter to itself, except for one letter
$a$ that is mapped to~$a^p$, where $p$ is prime, the only elements
of~$\End(\mathbf{w})$ that are factors of it are the letter exchanges
and the factors of which it is also a factor. From the preceding
observation it also follows that there are no $\mathbf{w}$-atypical
words. Thus, $\mathbf{w}$ is a negative example for Problem~\ref{pb:2}
and a positive example for Problem~\ref{pb:3}.

\acknowledgments{
Besides the connection with Pansiot's work, we thank the anonymous
referee for comments that led to improved readability of this paper.}

\bibliographystyle{abbrvnat}
\bibliography{sgpabb,ref-sgps}
\end{document}